\newcounter{view} 
\newenvironment{proof}{\vspace*{2ex}\noindent {\em Proof:}
}{\hfill $\Box$ \\[2ex]}
\newcommand{\parens}[1]{\!\left( #1 \right)}
\newcommand{\sbrace}[1]{\left[ #1 \right]}
\newcommand{\abs}[1]{\left| #1 \right|}
\def\subsection{\@startsection {subsection}{2}{\z@}{.8ex
plus 1ex} {1ex}{\bfseries}}
\renewcommand\subsubsection{\@startsection{subsubsection}{3}{\z@}%
                                     {-3.25ex\@plus -1ex \@minus -.2ex}%
                                     {-1.5ex \@plus -.2ex}% Formerly 1.5ex \@plus .2ex
                                     {\bfseries\sffamily}}
\numberwithin{equation}{section}
\newtheorem{remark}{Remark}[section]
\newtheorem{theorem}{Theorem}[section]
\newtheorem{proposition}{Proposition}[section]
\newtheorem{lemma}{Lemma}[section]
\newcommand{\iso}{\cong}
\newcommand{\maps}{{\,:\,}}
\newcommand{\NN}{\mathbb{N}}
\newcommand{\RR}{\mathbb{R}}
\newcommand{\QQ}{\mathbb{Q}}
\newcommand{\ZZ}{\mathbb{Z}}
\newcommand{\CC}{\mathbb{C}}
\newcommand{\tensor}{\otimes}
\newcommand{\Map}{\operatorname{Map}}
\newcommand{\tr}{\mathrm{tr}}
\newcommand{\qtr}{{\hyperref[p:qtr]{\mathrm{qtr}}}}
\newcommand{\lieg}{{\hyperref[p:lie-alg]{\mathfrak{g}}}}
\newcommand{\lieh}{{\hyperref[p:lie-alg]{\mathfrak{h}}}}
\newcommand{\Weyl}{{\hyperref[p:lie-alg]{\mathcal{W}}}}
\newcommand{\A}{{\hyperref[p:rings]{\mathcal{A}}}}
\newcommand{\s}{{\hyperref[p:small]{\mathbf{s}}}}
\newcommand{\q}{{\hyperref[p:small]{\mathbf{q}}}}
\newcommand{\DD}{{\hyperref[p:drinfeld]{\mathcal{D}}}}
\newcommand{\ad}{{\hyperref[p:invariant]{\operatorname{ad}}}}
\newcommand{\even}{{\hyperref[p:divided]{\mathrm{ev}}}}
\newcommand{\invariant}{{\hyperref[p:invariant]{\mathrm{inv}}}}
\newcommand{\restricted}{{\hyperref[p:full]{\mathrm{res}}}}
\newcommand{\full}{{\hyperref[p:full]{\mathrm{full}}}}
\newcommand{\nonrestricted}{{\hyperref[p:full]{\mathrm{nonres}}}}
\newcommand{\divv}{{\hyperref[p:divided]{\mathrm{divided}}}}
\newcommand{\divided}[1]{{\hyperref[p:divided]{\,\parens{#1}}}}
\newcommand{\weight}{{\hyperref[p:cartan]{\mathrm{weight}}}}
 \newcommand{\HC}{{\hyperref[p:HC]{\mathrm{HC}}}}
\newcommand{\cs}{{\hyperref[p:cs]{\operatorname{cs}}}}
\newcommand{\hen}{{\hyperref[p:hennings]{\operatorname{hen}}}}
\newcommand{\blam}{{\hyperref[p:Hopf0]{\sbrace{\lambda}}}}
\newcommand{\bgam}{{\hyperref[p:Hopf0]{\sbrace{\gamma}}}}
\newcommand{\Rlarge}{{\hyperref[p:trha]{R}}}
\newcommand{\Ufull}{{\hyperref[p:divided]{U}_{\full}}}
\newcommand{\Ufullm}{{\hyperref[p:divided]{U}^{\tensor m}_{\full}}}
\newcommand{\Ufullz}{{\hyperref[p:divided]{U}_{\full,0}}}
\newcommand{\Acomplete}{{\hyperref[p:Hopf0]{\widehat{\A}}}}
\newcommand{\Ucomplete}{{\hyperref[p:Hopf]{\widehat{U}}}}
\newcommand{\Ufullcomplete}{{\hyperref[p:Hopf]{\widehat{U}}_{\full}}}
\newcommand{\Ufullcompletem}{{\hyperref[p:Hopf]{\widehat{U}}^{\tensor
      m}_{\full}}}
\newcommand{\Uq}{\hyperref[p:full]{U}_q}
\newcommand{\Uqm}{\hyperref[p:full]{U}^{\tensor m}_q}
\newcommand{\Uqz}{{\hyperref[p:full]{U}_{q,0}}}
\newcommand{\Uweight}{{\hyperref[p:full]{U}_{\weight}}}
\newcommand{\Uqcomplete}{{\hyperref[p:Hopf]{\widehat{U}}_q}}
\newcommand{\Uqcompletem}{{\hyperref[p:Hopf]{\widehat{U}}^{\tensor m}_q}}
\newcommand{\Uqzcomplete}{{\hyperref[p:Hopf]{\widehat{U}}_{q,0}}}
\newcommand{\Uweightcomplete}{{\hyperref[p:Hopf]{\widehat{U}}_{\weight}}}
\newcommand{\Uweightcompletem}{{\hyperref[p:Hopf]{\widehat{U}}^{\tensor
      m}_{\weight}}}
\newcommand{\universal}{{\hyperref[p:universal]{\Gamma}}}
\newcommand{\Ik}{{\hyperref[p:Hopf]{I_{k}}}}
\newcommand{\enrich}{{\hyperref[p:small]{\Phi}}}
\newcommand{\quantuminteger}[2]{\hyperref[p:rings]{\sbrace{#1}_{#2}}}
\newcommand{\Kinteger}[1]{{\hyperref[p:cartan]{\sbrace{#1}}}}
\newcommand{\reduced}[1]{{\hyperref[p:reduced]{\parens{#1}}}}
\newcommand{\bracket}[1]{\hyperref[p:lie-alg]{\left\langle #1 \right \rangle}}
\newcommand{\lattice}{{\hyperref[p:lie-alg]{\Lambda}}}
\newcommand{\fullchamber}{{\hyperref[p:reps]{\Lambda^+}}}
\newcommand{\rcoeff}[1]{{\hyperref[p:trha]{b_{#1}}}}
\newcommand{\twist}{{\hyperref[p:universal]{r}}}
\newcommand{\clasp}{{\hyperref[p:universal]{C}}}
\begin{document}

\title{\Large Relationship of the Hennings and Chern-Simons\\
Invariants For  Higher Rank Quantum Groups}

\author{Winston Cheong}
\affil{Kansas State University (\texttt{winstonc@ksu.edu})}
\author{Alexander Doser}
\affil{Northwestern University (\texttt{alexanderdoser2023@u.northwestern.edu})}
\author{McKinley Gray} 
\affil{University of Illinois Chicago (\texttt{mgray6@uic.edu})}
\author{Stephen F. Sawin}
\affil{Fairfield University (\texttt{ssawin@fairfield.edu})}

\maketitle

\begin{abstract}
The Hennings invariant for the small quantum group associated to an
arbitrary simple Lie algebra at a root of unity is shown to agree
with Jones-Witten-Reshetikhin-Turaev 
invariant arising from Chern-Simons filed theory for the same Lie algebra and the same root of unity on all
integer homology three-spheres, at roots of unity where both are
defined.  This partially generalizes the work of Chen, et al. (\cite{CYZ12,CKP09}) 
which relates the Hennings and Chern-Simons
invariants for $\mathrm{SL}(2)$ and $\mathrm{SO}(3)$ for arbitrary
rational homology three-spheres. 
\end{abstract}

\nocite{BCL14}

%%%%%%%%%%%%%%%%%%%%%%%%%%%%%%
%%%%%%%%%%%%%%%%%%%%%%%%%%%%%%

\section*{Introduction}

In \cite{CKP09} Chen,  Kuppum,  and Srinivasan show that the Jones-Witten or Witten-Reshetikhin-Turaev three-manifold invariant 
 (which here is called the Chern-Simons three-manifold invariant to emphasize the light it sheds on Chern-Simons quantum field theory) associated to the Lie
group $\mathrm{SO}(3)$ at odd levels of the  $\mathrm{SO}(3)$
theory (in the sense of Dijkgraaf and Witten \cite{DW90},
corresponding to the quantum group at odd primitive  roots of unity
with root lattice representations) and the Hennings invariant
associated to the corresponding quantum group have a tight
relationship:  The Hennings invariant of a manifold is the
Chern-Simons invariant times the order of the first homology (or zero
if it is infinite order).  This result is new in the case of rational
homology three-spheres (i.e. when the order is finite), in which case
each invariant determines the other.  This result was generalized by Chen, Yu
and Zhang in 
\cite{CYZ12} to the same result for the $\mathrm{SL}(2)$ invariant at
integer levels (corresponding to the same quantum group at even roots
of unity with all representations).  Notice in each case that this
implies the invariants agree exactly on integral homology spheres.  

One would like to generalize this result to arbitrary quantum groups,
which is to say from the Chern-Simons point of view to arbitrary complex
 simple Lie groups, or at least complex simply-connected simple
Lie groups. This is not primarily because the higher-rank invariants
are interesting in themselves; indeed, both the Hennings and CS
invariants are so complicated it is difficult to imagine calculating
either invariant beyond lens spaces and perhaps $\mathrm{SL}(3)$.
However, one expects from physics that the CS invariant is
telling us deep geometric information, and it is reasonable to guess
that if the Hennings invariant is so deeply connected to the CS
invariant then it too is telling us something geometric (e.g.,
perturbative aspects of 
Chern-Simons theory).  A property 
of the CS and Hennings invariants that applies to the rank one case may be
an artifact of the algebraic simplicity of this case, while a property
of a wide range of examples is likely to be connected with the
underlying geometry of the situation, and a proof that works in a
general case might brings us closer to revealing that geometry.

There are two problems with generalizing the claim in the above two
papers to higher rank quantum groups (from here on we will be working entirely
with quantum groups, relegating mentions of Lie groups themselves
to motivation).  The first is that their argument relies heavily on a
complete knowledge of the center of the small quantum group, which is not
known for higher rank and seems like a very deep problem (though
important progress is being made on it, e.g. Lachowska (\cite{Lachowska03})). The second is that
the result as stated seems unlikely to be true! If every nilpotent
element of the center of the quantum group squares to zero, as in the
rank one case, the result still holds, but there is no reason to
believe this is the case, and if it is not the relationship between
the invariants may be quite complicated.  However, this issue does not
come up with integral homology spheres.

The strategy follows the strategy of  \cite{CKP09} and relies on the remarkable work of Habiro and L\^e \cite{HabiroLe16}. The key is to show that the universal invariant of zero linking matrix tangles at roots of unity, from which surgery presentations of homology three spheres are built, live in a particular well-behaved subspace which in the case of one-tangles is spanned by central idempotents.  This space is best described as being the image of an easily defined subspace of the generic quantum group under the extension to a root of unity (because the Harish-Chandra homomorphism is an isomorphism in the generic  case).  It would be possible to argue entirely at a root of unity that the calculation of the universal invariant ends up in this space, but in the spirit of \cite{HabiroLe16} for a very slight increase in complexity one can compute the universal invariant entirely in the generic case, where it clearly remains in the desired space, and argue that the universal invariant is preserved by the extension to roots of unity. 

Section One expands on one author's previous paper \cite{Sawin06a} to
extend and complete the restricted quantum group so that it is a
topological ribbon Hopf algebra.  It also introduces a grading on the
algebra introduced by Habiro in \cite{Habiro06} and developed by
 \cite{HabiroLe16} which is here called left degree
(We follow \cite{Habiro06} in calling the left-degree-zero component the even part,
but this is distinct from the use of the word ``even'' in \cite{CYZ12}).

Section Two reviews the universal invariant of tangles, focusing on
Habiro's (\cite{Habiro06}) notion of bottom tangles.  It also reviews
the small quantum group at a root of unity.  The fundamental new result
here is the map $\enrich$ from the topological ribbon Hopf algebra
extending and completing the restricted quantum group to the ribbon
Hopf algebra called the small quantum group, and
\autoref{pr:phi} which  says that it intertwines the universal
invariant of tangles.  Essentially, the universal invariant is the
same for the generic quantum group and the quantum group at roots of
unity.
 Section Two also reviews the representation
theory, center and invariant functionals, proving that the even part of
the center of
the completion of the restricted quantum group is spanned by elements
$z_\theta$ 
constructed from quantum traces. The two main results of this section
together give a complete description of the image of the even part of
the center under $\enrich$ as the span of these $z_\theta$.

Section Three defines the invariant functionals from which the Hennings and Chern-Simons three-manifold
invariants are constructed.  It uses results of \cite{Habiro06} (and imitates the argument of similar results in \cite{HabiroLe16})
to prove the key result that the
universal invariant of a bottom tangle with zero linking matrix is in
the even part of the tensor product of the completion of the
restricted quantum group.  Thus  when the tangle has one open component
its universal invariant  is an infinite linear combination of the
central elements $z_\theta$, and its universal invariant in the small quantum
group is a finite linear combination of their images.  This is the key insight from which the proof of the main
result follows. 

   Integral homology three-spheres are
obtained by surgery on the closure of zero linking matrix tangles with
twists added.  Thus  the computation of any three-manifold
invariant constructed from the universal invariant is reduced to a computation
involving the underlying invariant functional and these $z_\theta$.
This computation in fact agrees for any such three-manifold
invariant.  

This research was completed during Fairfield University's Summer REU
in Mathematics and Computational Sciences in summer 2015 and was made
possible by NSF grant NSF DMS-1358454.  The authors would like to thank an unnamed referee for corrections on the appropriate completion of the algebras. 

%%%%%%%%%%%%%%%%%%%%%%%%%%%%%%%%

\section{Quantum Groups and their Completions}

\subsection{Large Quantum Group}
\label{ss:largeQG}

This subsection defines an extended version of the standard quantum
group over the ring of Laurent polynomials in a way that contains the
various standard specializations, restricts coherently to roots of
unity, and permits  defining link invariants.  In this it extends work of one author
(\cite{Sawin06a}), but the bulk of the material is standard and
largely comes from Lusztig (\cite{Lusztig90b,Lusztig90c,Lusztig93}) and sticks
closely to Lusztig's notation,  most of it appearing in a more expository
fashion in Chari and Pressley (\cite{CP94}).  Chen et al (\cite{CYZ12}) define $\widehat{U}_\zeta$
which is our $\Uq(\mathfrak{sl}_2)$ with $\zeta=q^{1/2}$,
$E=E_1,$ $F=F_1$ and $K^2=K_1$. Habiro and L\^e (\cite{HabiroLe16}) define
$U_\nu(\lieg)$ exactly as our $\Uq(\lieg)$ with $\nu=q$ and with the
opposite comultiplication and antipode.

\subsubsection{Lie algebra and Cartan matrix}\label{p:lie-alg}
Let $\{a_{i j}\}_{i,j=1}^M$ be a \emph{Cartan matrix} and $\{d_i\}_{i=1}^M$
be a minimal sequence of integers such that $d_i a_{i j}$ is positive
definite.  Let $\lieh$ be an $n$ dimensional Euclidean space and let
$\alpha_i \in \lieh^*$ for $1 \leq i \leq M$, called \emph{simple roots}, be
such that $\bracket{\alpha_i,\alpha_j}=d_i a_{i j}$ (thus the inner
product is normalized so \emph{short roots} have length squared $2$).  Let
$\Lambda_r$, the \emph{root lattice}, be the lattice in $\lieh^*$ generated
by the $\alpha_i$.  Let $\lattice$, the \emph{weight lattice}, be the lattice
in $\lieh^*$ whose inner product with each $\alpha_i/d_i$ is an
integer. Let $L$ be the smallest integer such that $L$ times any inner
product of weights is an integer.  Finally let the \emph{Weyl group} $\Weyl$ be
the group generated by reflections of $\lieh^*$ about the hyperplanes
$\{x\in \lieh^* \mid \bracket{x,\alpha_i}=0\}$.  $\Weyl$ preserves
$\lattice_r \subset \lattice$.  See Humphreys (\cite{Humphreys72}) for
detail on Lie algebras and their representations.

The orbit of $\{\alpha_i\}$ under the Weyl group is the set of
\emph{roots} $\Delta \in \lattice_r$.  Each element of $\Delta$ is
either a nonnegative or nonpositive combination of simple roots. Call
$\Delta^+$ the set $\beta_1, \ldots, \beta_N$ of positive roots. Let $\rho \in \lattice$ be half
the sum of the positive roots. The \emph{translated action} of the Weyl group
on $\lieh^*$ is given by $\lambda \mapsto \sigma(\lambda+\rho)-\rho$
for each $\sigma \in \Weyl$.

Let $\lieg$ be the complex, simple Lie algebra associated to $a_{i j}$
with Cartan subalgebra $\lieh$. 

\subsubsection{Quantum integers and base rings} \label{p:rings}
Let $\A_s=\ZZ[s,s^{-1}]$ be the ring of Laurent polynomials in an
indeterminant $s$ and let $\A_q=\ZZ[q,q^{-1}]$ be embedded in $\A_s$
via $q=s^L$. Let 
$q_i=q^{d_i}$, and also write $q_{\beta_j}=q_i$ for any positive root
$\beta_j$ of the same length as $\alpha_i$. Define
\begin{align*} \quantuminteger{n}{i}&=(q_i^n-q_i^{-n})/(q_i-q_i^{-1}) \in \A_q\\ 
\quantuminteger{n}{i}!  &=\quantuminteger{n}{i}\cdot \quantuminteger{n-1}{i} \cdots \quantuminteger{1}{i} \in \A_q\\
  \quantuminteger{\begin{matrix} m\\ n \end{matrix} }{i} &
  =\quantuminteger{m}{i}!/(\,\quantuminteger{n}{i}! \quantuminteger{m-n}{i}!) \in \A_q.
\end{align*} %

\subsubsection{The full Cartan subalgebra}\label{p:cartan}
Define the algebra
\begin{gather} \label{eq:full-cartan}
U_0(\lieg, \full,\A_s)=\Map(\lattice, \A_s)
\end{gather}
 (from here on drop the reference to
$\lieg$ as it and the Cartan matrix will remain fixed throughout) the
space of set-theoretic maps.  Define $K_\gamma,K_i^{\pm 1},\Kinteger{K_i;m},\Kinteger{K_i;m,n} \in
U_0(\full,\A_s)$ for $\gamma \in \lattice$, $1 \leq i \leq M$ and $m,n
\in \NN$ by
\begin{align}
K_{\gamma}(\lambda)&=s^{L\bracket{\lambda, \gamma}} \nonumber\\
K_i^{\pm 1}&= K_{\pm \alpha_i}\nonumber\\
\Kinteger{K_i;m}(\lambda)&=\quantuminteger{\bracket{\lambda,
    \alpha_i}/d_i+m}{i}\nonumber \\
\Kinteger{K_i;m,n}(\lambda)&= \quantuminteger{\begin{matrix} \bracket{\lambda,
    \alpha_i}/d_i +m\\ n \end{matrix} }{i}. \label{eq:rest}
\end{align}
Define $U_0(\restricted,\A_s)$ to be the subalgebra generated by
$K_i^{\pm 1}$ (or equivalently $K_\theta$ for $\theta \in \lattice_r$)
and $\Kinteger{K_i;m,n}_i$, define $U_0(\restricted,\A_q)$ to be the subalgebra
over $\A_q$ generated by the same elements,  define
$U_0(\restricted,\weight,\A_s)$ to be the subalgebra generated by
by $K_\gamma$
for $\gamma \in \lattice$ and $\Kinteger{K_i;m,n}_i$, and finally define $U_0(\nonrestricted,\A_q)$ to be
generated by $K_i^{\pm 1}$ and $\Kinteger{K_i;m}_i$. The Cartan
subalgebra of the nonrestricted quantum group \cite{CP94}[Sec. 9.2]
and of the restricted quantum group \cite{CP94}[Sec. 9.3]
clearly map homomorphically onto $U_0(\nonrestricted,\A_q)$ and
$U_0(\restricted,\A_q)$ respectively, and it is easy
to check that it has no kernel (any polynomial in the generators is
clearly nonzero on sufficiently large weights). 

\subsubsection{The full Hopf  algebra}\label{p:full}
Define $U(\full,\A_s)$ to be the algebra over $\A_s$ generated by 
$\{E_i,F_i\}_{i=1}^M$   and $\Map(\lattice,\A_s)$ subject to relations 
\begin{align}
  E_i f(\lambda)=f(\lambda-\alpha_i) E_i,  &\qquad F_i
  f(\lambda)=f(\lambda+\alpha_i) F_i,\label{eq:konef}
\\
 E_i F_j - F_j E_i & =\delta_{i,j}
 \Kinteger{K_i;0}_i,\label{eq:eonf}
\\
 \sum_{r=0}^{1-a_{i j}} (-1)^r 
 \quantuminteger{\begin{matrix} 1-a_{i j}\\
     r \end{matrix} }{i} 
 E_i^{1-a_{i j}-r} E_j E_i^r&=0
 \qquad \text{if } i \neq j,\nonumber\\ 
 \sum_{r=0}^{1-a_{i j}} (-1)^r
 \quantuminteger{\begin{matrix} 1-a_{i j}\\ r \end{matrix} }{i}
 F_i^{1-a_{i j}-r} F_j F_i^r&=0 \qquad \text{if } i \neq
 j.\label{eq:serre}
\end{align}

 $U_0(\full,\A_s)$ is naturally a subalgebra of $U(\full, \A_s)$. 
 Define $U_{\pm}(\full, \A_s)$ and $U_{\pm}(\full, \A_q)$ to be the
 subalgebras over the given rings generated by $\{E_i\}$ and
 $\{F_i\}$ respectively.  Define $U(\nonrestricted,\A_q)$ to be the
 subalgebra generated by
 $U_0(\nonrestricted,\A_q)$ and $\{E_i,F_i\}$.
 In each case  define the \emph{degree} on this
 algebra, a grading by $\lattice_r$ sending $E_i$, $f(\lambda)$ and $ F_i$ to
 $\alpha_i$, $0$ and $-\alpha_i$.

Two key facts about more standard specializations readily extend to this larger
algebra.  First, it is immediate from \autoref{eq:konef} and
\autoref{eq:eonf} that any element of $U(\full)$ or
$U(\nonrestricted)$ can be written as a linear combination of products
$\mathbf{F}\mathbf{K}\mathbf{E}$, where $\mathbf{F} \in U_-$, $\mathbf{K}\in U_0$ and $\mathbf{E} \in U_+$. In fact, if $\theta_1, \theta_2$ are sums of positive roots and $U_{\theta_1, \theta_2}$ is the span of all $\mathbf{F}\mathbf{K}\mathbf{E}$ where $\mathbf{F}$ is of degree $-\theta_1'$, $\mathbf{E}$ is of degree $\theta_2'$, and each $\theta_i-\theta_i'$ is a sum of positive roots, then the collection of all $U_{\theta_1, \theta_2}$ is a filtration making $U(\full)$ a filtered algebra.  
Second
following \cite{Lusztig90b}, the braid group of the Weyl group  acts as
automorphisms of $U(\full)$, and a choice of reduced factorization of the
longest element of the Weyl group gives a map of the Weyl group into
the braid group which in turn gives an ordering 
$\beta_1, \ldots, \beta_N$ of $\Delta^+$ and elements $E_{\beta_j},
F_{\beta_j} \in U_{\pm}$ homogeneous respectively of
degree $\pm \beta_j$. Further, writing $\vec{r}=(r_1, \ldots, r_N)$ with
$r_i =0,1,2,\ldots$, one can show that a basis for $U_{\pm}$
is given by $E_{\beta_1}^{r_1} \cdots E_{\beta_N}^{r_N}$ and $F_{\beta_1}^{r_1} \cdots
  F_{\beta_N}^{r_N}$ respectively. 

From these two facts it follows that $U(\nonrestricted,\A_q)$ is in fact the
nonrestricted specialization of the quantum group associated to
$\lieg$ in \cite{CP94}[Sec. 9.2].

\subsubsection{Divided powers and the restricted specialization}\label{p:divided}
Of course $U(\full,\A_s)$ can be extended to the field of fractions
over $\A_s$, and over that field one can define the so-called divided powers
\begin{gather}
  \label{eq:divpow} E_{\beta_j}^{\divided{r}}=E_{\beta_j}^r/\quantuminteger{q}{\beta_j}!
\end{gather}
and likewise for $F$.  Define $U(\full, \divv,\A_s)$ to be the
subalgebra generated over $\A_s$ by the divided powers
\autoref{eq:divpow} and $\Map(\lattice,
\A_s)$.  Once again $U_{\pm}(\full, \divv,\A_s)$ have as bases
\begin{align} E_{(\vec{r})}&= E_{\beta_1}^{\divided{r_1}} \cdots E_{\beta_N}^{\divided{r_N}}
  \nonumber\\ 
F_{(\vec{r})}&= F_{\beta_1}^{\divided{r_1}} \cdots
  F_{\beta_N}^{\divided{r_N}}
  \label{eq:pmbasis}
\end{align} 
and because \cite{CP94}[Sec. 9.3] argues 
\[E_i^{\divided{r}} F_i^{\divided{s}} = \sum_{0 \leq t \leq r,s} F_i^{\divided{s-t}} \Kinteger{K_i;
  2t-s-r,t}_i E_i^{\divided{r-t}}\]
it again follows that $U(\full, \divv,\A_s)$ is spanned by 
\begin{gather} \label{eq:basis}
  \mathbf{F} \mathbf{K} \mathbf{E}
\end{gather}
where $\mathbf{K} \in \Map(\lattice,\A_s)$, $\mathbf{F}=F_{\divided{\vec{r}}}$, and $\mathbf{E}=E_{\divided{\vec{s}}}$ for some $\vec{r}$ and $\vec{s}$.  Call this algebra $U_{\full}$.  It again has a filtration $U_{\theta_1, \theta_2}$ by pairs of positive elements of the root lattice.  

Similarly $U(\restricted, \divv, \A_q)$ is the subalgebra over
$\A_q$ generated by the divided powers \autoref{eq:divpow} and
$U_0(\restricted, \A_q)$, and 
$U(\restricted, \divv, \weight, \A_s)$ is the subalgebra generated 
by the divided powers and $U_0(\restricted, \weight, \A_s)$.  In each
case the algebra is spanned by \autoref{eq:basis} with $\mathbf{K}$ an element
of the associated Cartan subalgebra.  In particular as above one can
check that the restricted quantum group $U_q^{\restricted}$
\cite{CP94}[9.3] is
isomorphic to $U(\restricted, \divv, \A_q)$.  Henceforth call this
algebra $U_q$ for simplicity. Also define $U_{\weight}=U(\restricted,
\divv, \weight,\A_s)$, the restricted specialization of what \cite{CP94}
calls the simply connected version of the quantum group.

 For the algebras labeled $\restricted$ and $\nonrestricted,$ define a
 second grading, the  \emph{left degree}, valued in $\lattice_r/2\lattice_r$
 (or $\lattice/2\lattice$ if labeled by $\weight$).  It assigns the
 weights $r\alpha_i$, $0$,$-\lambda$, $0$, and $0$ to $E_i^{\divided{r}}$, $F_i^{\divided{r}}$, $K_\lambda$, $\Kinteger{K_i,m}$,
 and $\Kinteger{K_i;m,n}$ respectively.  The degree $0$ subalgebra of each of these
 algebras is called \emph{even} or $\even$, so for instance $\Uq^{\even}=U(\restricted,
 \divv, \even,\A_q)$.
\begin{remark}
\cite{Habiro06} as well as \cite{HabiroLe16} define the notion of \emph{even} with the role of  $E$ and $F$ reversed,
because they are using the opposite comultiplication.  The term \emph{even}
here should be treated as identical to theirs, and the results about
the even subalgebra here are, apart from details about the ring,
algebra and completions used, as duplicates of results in those papers, reiterated in the current language to fix notation.
\end{remark}

\subsection{Topological Ribbon Hopf Algebra}
\label{ss:ribbon-hopf}

Topological Hopf algebras have a variety of definitions such as
Bonneau \& Sternheimer and Habiro \& L\^e (\cite{BonneauSternheimer05,HabiroLe16}), but
here it will be useful to give a slightly more general one. 

\subsubsection{Topological Hopf algebra}\label{p:THA-definition}
Recall following Bourbaki \cite{Bourbaki98d} a  \emph{uniform structure}   on a space $U$ is a collection of sets  $O \subset U \times U$, closed under superset, intersection and interchanging the two factors, such that each \emph{entourage} $O$ contains the diagonal and for each $O$ there is an $O'$ such that if $(x,y),(y,z) \in O'$ then $(x,z) \in O$.  This captures enough of the structure of $\epsilon$-neighborhood in a metric space to permit the definition of Cauchy sequences and uniform continuity.  A collection of $O$ which are symmetric under switching factors, contain the diagonal and have the $O'$ property such that any finite intersection contains another such is called a fundamental system of entourages and the collection of all supersets of these give a uniform structure.

 On the category of filtered spaces a  uniform structure is a uniform structure on each subspace in the filter such that the uniform structure on the smaller subspaces have the subspace uniform structure.  

If $U$ is a module over a ring $A$ we assume that addition, ring multiplication and the ring action on $U$ are all uniformly continuous. Typically such  uniform structures will have a fundamental system  described by a collection of ideals in $A$ and submodules of $U$, the entourage associated to $I$ being $\{(x,y)\,|\, x-y \in I\}$.   Let $\overline{U}$ be the completion as a module over the completion $\overline{A}$.  The tensor product $U \tensor V$ of two modules $U$ and $V$ over $A$ (tensor over $A$ will be understood) contains a canonical uniform structure generated by $O_{O_1, O_2}= \sbrace{O_1 \tensor (V \times V)} \oplus \sbrace{(U \times U) \tensor O_2}$   for $O_1,O_2$ entourages of $U$ and $V$ respectively.  The completed tensor product $\overline{U \tensor V}$ will be written as $U \overline{\tensor} V$.  

A topological (graded) Hopf algebra is a module $U$ over a ring $A$, each with a uniform structure with uniformly continuous maps $M \maps U^{\overline{\tensor} m} \to \overline{U}$, $i \maps \overline{A} \to
\overline{U}$, $\Delta \maps \overline{U}   \to U^{\overline{\tensor} m}$, $\epsilon\maps  \overline{U} \to
\overline{A}$, and $S\colon \overline{U} \to \overline{U}$, satisfying all the usual assumptions of a Hopf algebra, as enumerated in  Chari \& Pressley \cite{CP94}.  
A topological ribbon Hopf algebra is a topological Hopf algebra $U$
together with an element $R \in U \overline{\tensor} U$ called an $R$ matrix
and ribbon element $g \in \overline{U}$ satisfying the usual axioms of a ribbon
Hopf algebra as enumerated in  Reshetikhin \& Turaev, Chari \& Pressley, Bakalov \& Kirillov, and
Turaev (\cite{RT90,CP94,BakalovKirillov01,Turaev94}.

\subsubsection{The   topological Hopf algebra} \label{p:Hopf0}
 
For each $\lambda \in \lattice$  write $\blam$ for the idempotent element of $\Ufullz=\Map\parens{\lattice, \A_s}$
sending $\gamma \in \lattice$ to $\delta_{\lambda, \gamma}$.  Give $\Ufullz$ the ``$\lattice$-adic topology'', the uniform structure  generated by an ideal $I_S=\{x\,|\,\, \blam x=0, \,\, \forall \lambda \in S\}$ for each finite $S \subset \lattice$.   $\Ufullz$ is complete in this uniform structure, the completed tensor product $U_{\full,0}^{\overline{\tensor} m} = \Map\parens{\lattice^{\times m}, \A_s}$, and multiplication is easily seen to be uniformly continuous.  In fact $\Ufullz$   is a topological Hopf algebra dual to the
group Hopf algebra on $\lattice$. More explicitly 
\begin{align}
 \blam \cdot \bgam &= \delta_{\lambda, \gamma}\blam  \label{eq:h-prod}\\
  \epsilon\parens{\blam}&=\delta_{\lambda,0}\\
  S(\blam)&= -\blam\\
  \Delta(\blam)&=\sum_{\gamma} \bgam \tensor \sbrace{\lambda-\gamma} \label{eq:h-delta}
\end{align}
are all uniformly continuous.  This structure  restricts to a literal Hopf algebra structure on
$U_0(\restricted)$ and $U_0(\nonrestricted)$.

Recall from \S\ref{p:full} that $\Ufull$ is filtered by $U_{\theta_1,\theta_2}$, and that $U_{\theta_1,\theta_2} \iso \bigoplus \mathbf{F} \Ufullz \mathbf{E}$, the sum being over a finite collection  of basis elements  $\mathbf{E}$ and $\mathbf{F}$  of $U_+$ and $U_-$.  Thus each subset in the filter inherits a uniform structure as a finite direct sum of copies of $\Ufullz$, and $\Ufull$ is a completed filtered module over $\A_s$.  Each element of the completed tensor product $\Ufull^{\overline{\tensor} m}$ can be written uniquely as (for some finite $n$)
\begin{align}
  \label{eq:tensor-basis}
  \sum_{j=1}^n \mathbf{F}_j \mathbf{K}_j \mathbf{E}_j& \quad \text{where}\\
  \mathbf{K}_j &\in U_{\full,0}^{\overline{\tensor} m}= \Map\parens{\lattice^{\times m}, \A_s}\nonumber\\
  \mathbf{F}_j &= F_{(\vec{r}_1)} \tensor \cdots \tensor F_{(\vec{r}_m)}\nonumber \\
  \mathbf{E}_j &= E_{(\vec{s}_1)} \tensor \cdots \tensor E_{(\vec{s}_m)}\nonumber 
\end{align}
that is to say each $\mathbf{E}_i $ and $\mathbf{F}_i$  stand for some unique basis element in $U_+^{\tensor m}$ and $U_-^{\tensor m}$ respectively.  It is easy to check that multiplication is uniformly continuous (because each submodule in the filter is finite dimensional over $\Ufullz$, it suffices to check on a pair of entries in \autoref{eq:tensor-basis}).  Defining topological Hopf algebra structures by combining the topological Hopf algebra structure of \autoref{eq:h-prod}-\autoref{eq:h-delta} as homomorphisms and anithomomorphisms with 
\begin{align}
  \epsilon(E_i)=\epsilon(F_i)&=0\\
S(E_i)=-E_i K_i^{-1} \qquad & \qquad S(F_i)=-K_i F_i\\
\Delta(E_i)= E_i \tensor K_i + 1 \tensor E_i \quad & \quad
\Delta(F_i)= F_i \tensor 1 + K_i^{-1} \tensor F_i \label{eq:hopf}
\end{align}
(extending this to divided powers (\autoref{p:divided}) in the obvious way), $\Ufull$ becomes a graded topological Hopf algebra.  From here on in all tensor products will be completed, so for ease of notation the $\overline{\tensor}$ symbol will be replaced with $\tensor$ unless it is important to emphasize.

\subsubsection{The completed ring} \label{p:Hopf}
For each positive integer $k$ let $\{k\}=(1-q)(1-q^2)\cdots (1-q^k)$.  Following \cite{HabiroLe16} give $\A_s$ or $\A_q$ the ``$\{k\}$-adic'' uniform structure generated by ideals $\Ik=\{k\}\A_s$ or $\Ik=\{k\}\A_q$ respectively, for each $k$.  This also gives a uniform structure on each version of $U$ by modules $\{k\}U$.  The completion of $\A_s$ and $\A_q$ with respect to this uniform structure are called $\Acomplete_s$ and $\Acomplete_q$. Write $\Ufullcomplete$ and $\Ufullcompletem$ for the completions of $\Ufull$ and $\Ufullm$ via this uniform structure (here we are treating $\Ufull$ as an ordinary module, \emph{not} as a graded module), but write $\Uqcomplete$, $\Uweightcomplete$, $\Uqcompletem$, $\Uweightcompletem$ for the completions with respect only to the $\{k\}$-adic topology.  The Hopf algebra structure is uniformly continuous and extends to the completions.

Of course the $\Acomplete$ uniform structure just allows infinite sums of terms with every increasing coefficients of factorial quantum integers, so now an arbitrary element of $\Ufullcompletem$ can be written 
\begin{align}  
  \sum_{k=0}^\infty  \sum_{j=1}^{n_{k}} \{k\} \mathbf{F}_{j,k} \mathbf{K}_{j,k} \mathbf{E}_{j,k} & \quad \text{where}\label{eq:full-tensor-basis}\\
  \mathbf{K}_{j,k} &\in U_{\full,0}^{\overline{\tensor} m}= \Map\parens{\lattice^{\times m}, \A_s}\nonumber\\
  \mathbf{F}_{j,k} &= F_{(\vec{r}_1)} \tensor \cdots \tensor F_{(\vec{r}_m)}\nonumber \\
  \mathbf{E}_{j,k} &= E_{(\vec{s}_1)} \tensor \cdots \tensor E_{(\vec{s}_m)}
  \nonumber 
\end{align}
as in \autoref{eq:tensor-basis}.

By the same token an arbitrary element of $\Uqcompletem$ can be written 
\begin{gather}
  \sum_{k=0}^\infty  \sum_{j=1}^{n_{k}} \{k\}
  \label{eq:q-tensor-basis} \mathbf{F}_{j,k} \mathbf{K}_{j,k} \mathbf{E}_{j,k}
  \end{gather}
where each $\mathbf{K}_{j,k}$ is an element of $U_{q,0}^{\tensor m}$.

\subsubsection{Weakly even elements}\label{p:weak}
The definition and lemma in this subsection is needed to prove \autoref{pr:ZLMT}, which guarantees that a certain class of calculations will give a result in the even subspace.   It borrows heavily from \cite{HabiroLe16}[Prop. 7.2].

An element of $\Ufullcompletem$ is \emph{weakly even} if the product of all its tensor factors in any order is in $\Uqcomplete^{\even}$, i.e. if any permutation in $S_n$ applied to the tensor factors followed by  multiplication gives an even element in $\Uq$.  

\begin{lemma}
  \label{lm:weak}
  If $A : \Ufull \to \Ucomplete_\full^{\tensor 2}$ is the map $(1\tensor S) \Delta$, then the completion of $A^{\tensor m}\sbrace{\Ufullcompletem} \tensor \Uq^{\even,\tensor n_1}\tensor \Uq^{\mathrm{odd},\tensor n_2}$  is weakly even for every $m$, $n_1$, and for every even  $n_2$.
\end{lemma}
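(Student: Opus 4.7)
The plan is to exploit the left-degree grading of $\Uq$ by the abelian group $\lattice_r/2\lattice_r$, so that the product of homogeneous elements in any order has the same total degree.  Weak evenness of a homogeneous tensor thus reduces to the single condition that the sum of left degrees of its factors lies in $2\lattice_r$.  By linearity and $\{k\}$-adic continuity it suffices to verify this vanishing on each homogeneous component in the given subspace before completion.

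The key step is to show that $A$ is \emph{degree-balanced}: for every $x \in \Ufull$, each homogeneous summand $a \tensor b$ of $A(x) = \sum x_{(1)} \tensor S(x_{(2)})$ satisfies $\deg(a)+\deg(b) \equiv 0 \pmod{2\lattice_r}$.  A direct check on generators handles this: $A(E_i) = E_i \tensor K_i^{-1} - 1 \tensor E_i K_i^{-1}$ has summands of total left degree $2\alpha_i \equiv 0$; $A(F_i) = F_i \tensor 1 - K_i^{-1} \tensor K_i F_i$ has summands of total degree $0$ and $\alpha_i + (-\alpha_i) = 0$; and $A(K_\lambda) = K_\lambda \tensor K_{-\lambda}$ has total degree $0$.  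The divided powers, the Cartan elements $\Kinteger{K_i;m,n}$, and the idempotents $\blam$ with coproduct $\Delta(\blam) = \sum_\gamma \bgam \tensor \sbrace{\lambda-\gamma}$ all admit the same analysis.  Because $A$ is an algebra homomorphism $\Ufull \to \Ufull \tensor \Ufull^{\mathrm{op}}$---that is, $A(xy) = \sum x_{(1)}y_{(1)} \tensor S(y_{(2)})S(x_{(2)})$---and because degree-balancedness is stable under this product (the degrees in the two slots simply add componentwise), the property propagates from generators to all of $\Ufull$, thence to $\Ufullcomplete$ by $\{k\}$-adic continuity, and to $\Ufullcompletem$ by taking $m$ independent copies.

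Assembling contributions then finishes the lemma.  Each of the $m$ applications of $A$ contributes a pair of factors whose left degrees sum to $0 \pmod{2\lattice_r}$; each of the $n_1$ even factors contributes left degree $0$ by definition of $\Uq^{\even}$; and the $n_2$ odd factors pair up to give $0 \pmod{2\lattice_r}$ precisely because $n_2$ is even and every element of $\lattice_r/2\lattice_r$ has order dividing two.  The total left degree on each homogeneous component is therefore $\equiv 0 \pmod{2\lattice_r}$, which by the opening observation is exactly weak evenness.  Passing to the completion preserves this conclusion because weak evenness is a closed condition in the $\{k\}$-adic uniform structure.

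The main obstacle is the generator check for the more intricate elements of $\Ufull$, particularly the Cartan idempotents $\blam$ and the divided powers $E_i^{\divided{r}}$, $F_i^{\divided{r}}$: one must verify that their coproducts---which involve $\blam$-summation and factorial-quantum-integer denominators that clear against $K$-integers---still produce only degree-balanced summands after passage to $\Uqcomplete$.  Once that calculation is in place, essentially mirroring \cite{HabiroLe16}[Prop.~7.2], the remainder is the routine parity bookkeeping outlined above.
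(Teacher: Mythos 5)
Your degree bookkeeping is correct and efficient on the part of $\Ufull$ generated by the $E$'s and $F$'s: one can verify that each summand of $A(E_i^{\divided{r}})$ and $A(F_i^{\divided{r}})$ has both factors of the same left degree, that $A$ is a homomorphism into $\Ufull\tensor\Ufull^{\mathrm{op}}$, and that this property propagates multiplicatively. This matches the first half of the paper's proof.

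The gap is in the Cartan part, and it is more than an ``intricate generator check.'' The domain of $A^{\tensor m}$ is $\Ufullcompletem$, and $\Ufullz=\Map(\lattice,\A_s)$ is \emph{not} contained in $\Uqz$ or $\Uweight$. The left degree grading is only defined on the $\restricted$, $\nonrestricted$, and $\weight$ algebras; it simply does not exist on $\Ufullz$. In particular, the idempotents $\blam$ have no left degree, so the phrase ``degree-balanced summand'' is not well posed for the terms $\bgam\tensor\sbrace{\gamma-\lambda}$ in $A(\blam)$. One cannot fix this by assigning $\bgam$ a formal degree: any consistent assignment (e.g. by analogy to $K_\gamma$) gives the pair total formal degree $\lambda - 2\gamma$, which varies with $\gamma$ and is not $0\bmod 2\lattice_r$. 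More seriously, even granting some parity count, you would only be arguing that the product ``would be'' in left-degree $0$; you would not have shown that the product lies in $\Uqcomplete$ at all, since a priori it lives in $\Ufullcomplete$. The paper handles this with a qualitatively different device: after choosing a permutation, for each pair $\sum_\gamma f(\lambda+\gamma)\tensor\bgam$ one commutes the two pieces through the interposed homogeneous factors, and the sum over $\gamma$ produces a delta-type collapse so that the pair can be ``eliminated,'' reducing to the pure $m=0$ case where the degree count is literal. That elimination argument is what forces the product back into $\Uqcomplete^\even$, and your proposal is missing it. Apart from this, the assembly in your final paragraph (pairing the $n_2$ odd factors) also needs to be stated more carefully---``every element of $\lattice_r/2\lattice_r$ has order two'' does not by itself make the odd degrees cancel unless they are the same class---but this is a secondary issue next to the Cartan gap.
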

\begin{proof}
    It suffices to check on a topological spanning set, and in fact if it is true of an element of $\Ufullcompletem$ it is also true when some of these tensor factors are multiplied together,  so it suffices to check the result on a element of $\Ufullcompletem$ whose factors are each of the form $f(\lambda)$, $E_{\divided{\vec{r}}}$, and  $F_{\divided{\vec{r}}}$.  By \autoref{eq:hopf}, $A$ applied to the latter two is contained in $\Uq^{\even,\tensor 2} \oplus \Uq^{\mathrm{odd},\tensor 2}$.  Finally $A f(\lambda)  = \bigoplus_{\gamma} f(\lambda+\gamma) \tensor \bgam$.  Thus it suffices to check when some of the tensor factors are even homogeneous elements, an even number of them are odd homogeneous elements, and the remaining are pairs of the form $\bigoplus_{\gamma} f(\lambda+\gamma) \tensor \bgam$.  Writing these in any order, the product is unchanged if for each such pair you eliminate those two factors and multiply the result by the sum of the degrees of the interposed factors (ignoring other factors that are part of pairs, which have degree $0$.  This reduces it the case $m=0$, where it is obvious.
\end{proof}

\subsubsection{The full topological ribbon Hopf algebra} \label{p:trha}
Define $R_0 \in \Ucomplete_{\full,0}^{\tensor 2}$ by
\begin{gather}
  \label{eq:hrmatrix}
  R_0= \sum_{\lambda, \gamma \in \lattice}  q^{\bracket{\lambda,
      \gamma}} \blam \tensor \bgam
\end{gather}
(notice here  coefficients in $\A_s$ are essential).  Define $R \in \Ucomplete_{\full}^{\tensor 2}$, by
\begin{align}
  \label{eq:largermatrix}
  R&=R_0 \sum_{\vec{r}} b_{\vec{r}} E_{\divided{\vec{r}}}
  \tensor F_{\divided{\vec{r}}}\\
  b_{\vec{r}}&= \prod_{j=1}^N q_{\beta_j}^{r_j(r_j+1)/2}
  (1-q_{\beta_j}^2)^{r_j} \quantuminteger{r_j}{\beta_j}\!! \nonumber.
\end{align}
Observe that of course each $b_{\vec{r}} \in \Ik$ for all but finitely many $k$ because $(1-q_{\beta_j}^2)^{r_j} \quantuminteger{r_j}{\beta_j}\!!$ is a multiple of $\{r_j\}$. 
$\Rlarge$ is quasitriangular  and together
with the ribbon element $g=K_{2\rho}$ defines a ribbon structure.

\section{Invariants of Tangles}

\subsection{Bottom Tangles and the Universal Invariant}
\label{ss:bottomtangles}

The universal invariant was first defined by Lawrence and Hennings
 (\cite{Lawrence88,Hennings96}).  Our discussion of bottom tangles and the universal
invariant follows \cite{Habiro06}, with terminology and definitions
most closely matching \cite{HabiroLe16}.

\subsubsection{Adjoint action and invariants} \label{p:invariant}

  In a topological Hopf algebra the adjoint
action of $a$ on $\overline{U}$ is given by $\ad_a x = \sum_j a'_j x S(a''_j)$
where $\Delta a = \sum_j a_j' \tensor a_j''$, and generalizes to an
action on $\overline{U}^{\tensor m}$ by applying $\ad^{\tensor m}$ to $\Delta^m(a)$.
$\overline{U}^{\tensor m, \invariant}$ is the subspace of elements 
on which this action is trivial, i.e. $\ad_a x= \epsilon(a)
x$. 

By \autoref{lm:weak}  $\Uq^\even$ is a module for the adjoint action of
$\Ufull$.  In particular, if $\lambda \in \lattice$ and $x \in \Uq$ of
degree $\theta \in \lattice_r$ then 
\begin{gather} \label{eq:lambda-adjoint}
\ad_{\blam}(x) = \delta_{\lambda, \theta} x.
\end{gather}

Define $\overline{U}^{*,\invariant}$ to be the set of uniformly
continuous linear functionals $\overline{U} \to \overline{A}$
invariant under the coadjoint action of $\overline{U}$ 
on its dual (i.e., $\psi$ such that $\psi(\ad_a(x))= \epsilon(a)
\psi(x)$), whose elements are called \emph{invariant functionals}.
These are 
exactly the uniformly continuous functionals $\psi$ such that $\psi(a b)=\psi\parens{b
  S^2(a)}$ for $a,b\in \overline{U}$.  In this case $\phi(a)= \psi(g^{-1} a)$ is
tracial in the sense that $\phi(a b)= \phi(b a)$, and indeed any
tracial functional $\phi$ gives a invariant functional $\psi(a)= \phi(g
a)$, so tracial and  invariant functionals are in one-to-one
correspondence.

\subsubsection{Framed tangles and bottom tangles} \label{p:tangle}
A \emph{framed tangle} is a smooth embedding of $m$ oriented  unit intervals and
$n$ oriented circles (\emph{open} and \emph{closed components}) in $D=\RR\times [-1,1] \times
[0,1]$ (in pictures the $x$ and $z$ components are in the paper/screen and $y$ is perpendicular) with endpoints of the intervals lying on $\RR \times \{0\}
\times \{0\}$ and $\RR \times \{0\} \times \{1\}$ together with a
smoothly varying normal vector at each point of each component which
is pointing in the positive $x$-direction at each endpoint.  Framed
tangles are considered up to isotopy of $D$ leaving the endpoints of
the interval fixed.  A \emph{framed bottom tangle} is a framed tangle
with the initial and final endpoint of the the $j$th interval at the
points $(2j,0,0)$ and $(2j-1,0,0)$.  A \emph{projection} of a framed
tangle is a projection of an isotopy representative onto the $x z$
plane so that the framing is never normal to the projection, the map
of each component into the plane is an immersion, and
self-intersections are all transverse double points and not at
critical points of $z$. Two projections represent the same isotopy
class of a framed tangle if and only they can be connected by planar isotopy
and the framed Reidemeister moves of \autoref{fg:reidemeister}
discussed by
Burde and Zieschang in  unframed form  and Trace in framed
form (\cite{BZ86,Trace83}). 

\begin{figure}

  \begin{center}
    \includegraphics[width=5.5in]{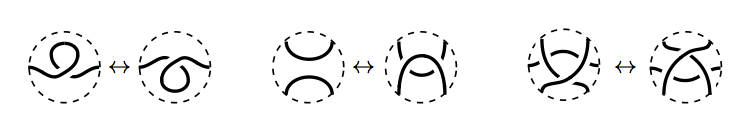}
\end{center}    
  \caption{The framed Reidemeister moves}
  \label{fg:reidemeister}
\end{figure}

If $T$ and $S$ are two tangles, write $T \tensor S$ for the tangle
formed by placing $T$ to the left of $S,$ and write $TS$ for the
tangle formed by placing $T$ above $S$ and rescaling $z$ by a factor
of $1/2$, assuming the open components of $T$ intersect the bottom
($\RR \times \{0\} \times \{0\}$) in the same locations and
corresponding orientations as the open components of $S$ intersect the
top, so that the result is again a tangle.

\subsubsection{The universal invariant} \label{p:universal}
Suppose $U$ is a topological ribbon Hopf algebra (\autoref{p:THA-definition}) with
quasitriangular $R= \sum_{k \in K} a_k \tensor b_k$ with each $a_k,
b_k \in U$ (the sum understood to converge) and ribbon element $g$.  Write $R^{-1}=
\sum_{k \in K} \tilde{a}_k \tensor \tilde{b}_k = \sum_{k \in K} S(a_k)
\tensor b_k =\sum_{k \in K} a_k \tensor S^{-1}(b_k)$.  The universal invariant of
a tangle $T$ with each closed component labeled by an invariant
functional $\universal_U(T)$ is defined as follows.  Steps (1)-(5) define the universal invariant for tangles with open components, Step (6) modifies the process for closed components
\begin{enumerate}
  \item Choose a \emph{projection} and an
    \emph{ordering of the components}, and place a
    \begin{enumerate}
    \item \emph{bead} at each local extreme point  of the height ($z$-direction) of each
      component where the
      component's orientation is pointing to the right, and a
    \item \emph{bead} on each strand of
      each crossing.
    \end{enumerate}
  \item A \emph{state} is an assignment of an element of the
    index set $K$ of the sum defining $R$ to each crossing of the projection.
    For each state, put elements of $U$ on each \emph{bead} as follows.
    \begin{enumerate}
    \item Put $g$
      on each bead at a minimum and $g^{-1}$ on each bead at a maximum.
    \item Put $a_k$ on the bead on the lower strand and $b_k$ on the bead on the
      upper strand of each positively oriented crossing, where $k$ is the
      element of $K$ associated to that crossing by the \emph{state}. 
    \item Put $\tilde{a}_k$ and $\tilde{b}_k$ on the lower and upper strands of each
      negative crossing.
    \item If both strands are pointing to the right, apply
      $S^2$ to the strand pointing up and to the right.
    \end{enumerate}
  \item Combine the beads by sliding together and replacing adjacent beads labeled $b$ and $a$ in the direction of the orientation
    with the  with a single bead labeled $ab$ \emph{until each component has a single bead}.
  \item Using the ordering of the components, the elements of $U$ on each bead  determine an element of $U^{\tensor m}$.
  \item Summing over all states gives an element $\universal_U(T)$ of the completion  $\overline{U}^{\tensor m}$ which we call the universal invariant of the tangle.
  \item If there are closed components,
    \begin{enumerate}
    \item choose a projection $P$ of an open tangle and an ordering of its components so that $P(C \tensor C \tensor \cdots \tensor C \tensor 1 \tensor \cdots \tensor 1)$ is a projection of $T$ with the first $k$ components becoming the closed components, where $C$ is the right pointing cup at the bottom left of \autoref{fg:universal} and $1$ is the tangle with one vertical component oriented up or down,
    \item $\universal(T)= (\phi_1 \tensor \cdots \phi_k \tensor 1 \tensor \cdots \tensor 1) \universal(P)$ where the $\phi_i$ are the invariant functionals labeling the closed components of $T$.
    \end{enumerate}
\end{enumerate}

  \begin{figure}[ht]
    \centering
    \includegraphics[width=5.5in]{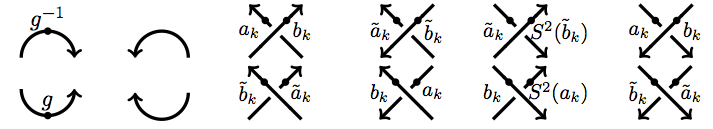}
    \caption{Labeling a tangle diagram to compute the universal
      invariant}
    \label{fg:universal}
  \end{figure}%
 Habiro   shows 
\begin{theorem}[\cite{Habiro06}] \label{th:universal} If a framed tangle
 $T$ has $m$ open components and if all closed components are labeled by
 invariant functionals  then
  the quantity $\universal_U(T)$ computed above is an element of
  $\overline{U}^{\tensor m}$ which  is independent of
the choices made in the 
construction (except for the obvious effect of reordering the open
components) and of ambient isotopy.  Further, concatenating two tangles
in the $x$ direction gives an invariant which is the tensor product of the two
invariants, and concatenating compatible tangles in the $z$ direction
(gluing open components appropriately) gives an
invariant which is the product of the invariants. Finally, if $T$ is a
bottom tangle and the components are ordered left to right then $\universal_U(T) \in \overline{U}^{\tensor m,\invariant}$.
\end{theorem}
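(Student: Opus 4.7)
The plan is to prove each assertion in turn by reducing it to a local identity among the beads, which follows from an axiom of the ribbon Hopf algebra. For well-definedness, planar isotopy away from the critical points changes nothing since beads are placed only at crossings and height extrema. Each framed Reidemeister move of \autoref{fg:reidemeister} corresponds to a standard identity: move R2 follows from $R R^{-1} = 1$; move R3 follows from the Yang--Baxter equation, which is a consequence of the quasitriangularity identities $(\Delta \tensor 1)(R) = R_{13} R_{23}$ and $(1 \tensor \Delta)(R) = R_{13} R_{12}$; and the framed R1 follows from the defining property of the ribbon element $g$. The moves that change orientation at cups and caps, together with the step (2d) prescription of applying $S^2$ on up-right strands, amount to the pivotal identity $S^2(x) = g x g^{-1}$ combined with the compatibilities $(S \tensor 1)(R) = R^{-1} = (1 \tensor S^{-1})(R)$.

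Multiplicativity under horizontal juxtaposition is immediate from the tensor product structure of the construction. Multiplicativity under vertical composition follows because joining a bottom bead of the upper tangle to the matching top bead of the lower tangle along an oriented strand corresponds, by step (3), to multiplying the two beads in $U$ in the order prescribed by the orientation.

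For the final assertion on bottom tangles, the plan is to exploit the geometric feature that all endpoints lie in adjacent pairs on the bottom face. Given $a \in \Ufull$, one inserts a bead labeled $a$ on a horizontal arc running just above $T$ and slides the arc downward through $T$. Each time the arc crosses a strand, the coproduct compatibility $(\Delta \tensor 1)(R) = R_{13} R_{23}$ distributes a Sweedler factor of $a$ along the strand, and the ribbon compatibility handles the extrema. When the arc reaches the bottom, each adjacent endpoint pair brings an $a'$ and an $S(a'')$ together, and the antipode identity $\sum a' S(a'') = \epsilon(a) \cdot 1$ collapses the configuration to $\epsilon(a) \, \universal_U(T)$. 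On the other hand, reading the same configuration from its initial position at the top gives exactly $\ad_a^{\tensor m}$ applied to $\universal_U(T)$ (after using the iterated coproduct to distribute $a$ across the $m$ components). Equating the two yields $\ad_a(\universal_U(T)) = \epsilon(a) \, \universal_U(T)$, which is invariance. The main technical obstacle is the careful bookkeeping of Sweedler indices and orientations as the bead traverses crossings and extrema, which is where the quasitriangular and ribbon compatibilities must be applied systematically at every local feature of the diagram.
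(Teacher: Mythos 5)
The paper does not prove this theorem: it is cited directly from \cite{Habiro06} and stated without argument, so there is no proof in the paper to compare against. Your outline is nevertheless a defensible sketch of the standard diagrammatic argument. The Reidemeister-move verifications and both multiplicativities are exactly as you describe. For the bottom-tangle invariance, the ``push a bead through'' strategy is the right geometric idea, and you correctly identify the defining feature of a bottom tangle --- the adjacent, co-oriented endpoint pairs --- as the place where $\sum a'\,S(a'') = \epsilon(a)\cdot 1$ collapses the configuration. Be aware, though, that the ``careful bookkeeping'' you defer to the end is where essentially all the content lives: sliding a Sweedler factor past a crossing requires $R\,\Delta(x) = \Delta^{\mathrm{op}}(x)\,R$, sliding past an extremum requires $S^2(x) = g\,x\,g^{-1}$, sliding past the $S^2$ corrections introduced in step (2d) requires the same identity applied in the opposite sense, and the whole scheme requires the deposited factors $a^{(1)}, S(a^{(2)}), a^{(3)}, S(a^{(4)}), \ldots$ to actually meet in the correct linear order at the end; none of this is automatic and each must be verified. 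Habiro's own argument in \cite{Habiro06} avoids this pointwise diagram chase by a more structural route: he shows that $U$ with the adjoint action is a braided-commutative Hopf algebra in the category of $U$-modules and that the universal invariant is a braided functor on the braided category of bottom tangles, from which the landing in $\overline{U}^{\tensor m, \invariant}$ follows by naturality. Both routes are valid; yours is more concrete and elementary, his is cleaner to make airtight.
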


\subsubsection{Properties of the universal invariant} \label{p:properties}

The universal invariant can be computed in very versatile ways.  Choosing an open neighborhood in plane that intersects the tangle diagram transversely with no the points which are assigned \emph{beads} in step (1) on the boundary (a \emph{fragment}), compute steps (1)-(3)   in the neighborhood and the complement separately, to get two formal sums of assignments of $U$ elements assigned to components of each.  Then sum over pairs of \emph{states} in the interior and exterior  separately, of the result of apply steps (4)-(6)  to each such pair of assignments.

Natural operations on tangles and such fragments correspond to natural operations on the
Hopf algebra.
\begin{enumerate}
  \item If one doubles one open component of a
fragment to two parallel components (the framing makes this unambiguous), the invariant of the
doubled tangle is obtained from the invariant of the original by, for each state,
applying $\Delta$ to the label on the doubled component and putting
the first factor on the rightmost of the two new components (according
to the orientation) and the second on the leftmost (interpreting the sum in $\Delta$ as breaking the state up into multiple states).
\item Reversing the
orientation of an open component has the effect of applying $S$ to
that component, and then multiplying it on the left by $g^{-1}$ if the
original component started on the bottom and on the right by $g$ if
the original component ended on the top.   
\end{enumerate}

These are both  shown in
\cite{Habiro06}.

The sequel will  frequently mention the universal invariants $r$ of the bottom
tangle representing the positive twist, $r^{-1}$ of the bottom tangle
representing the negative twist, and $C$ of the clasp tangle, all
as illustrated in \autoref{fg:some-tangles}.  The first two are
elements of the center $\overline{U}^{\invariant}$, the third of
$\overline{U}^{\tensor 2, \invariant}$.  The closure of the clasp
tangle is called the Hopf link, and if each component is labeled by an
invariant functional its universal invariant is a scalar in
$\overline{A}$. 
  % Picture of clasp tangle
  \begin{figure}[ht]
    \centering
    \includegraphics[width=5.5in]{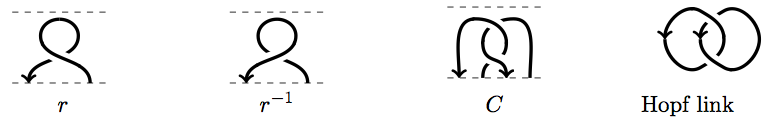}
    \caption{The positive and negative twists, the clasp tangle and
      the Hopf link}
    \label{fg:some-tangles}
  \end{figure}%

\subsection{Small Quantum Group}
\label{ss:smallQG}

The discussion of the small quantum group follows Lusztig
(\cite{Lusztig93,Lusztig90c}), with occasional small additions from
\cite{Sawin06a}.

\subsubsection{Small quantum group} \label{p:small}
Now restrict the generic $q$ to a root of unity.
Specifically, let $l$ be a positive integer and consider the
homomorphism $\A_s \to \QQ(\s),$ where $\s$ is a primitive $lL$th root
of unity (i.e. satisfies the $lL$th cyclotomic polynomial) given by
$s \mapsto \s.$  Since  $\{k\}=0$ in $\QQ(\s)$  for large enough $k$, this map
extends continuously to 
$\Acomplete_s$.  Define the extension through scalars
$U_\s=\Ufullcomplete \tensor_{\Acomplete_s} \QQ(\s)$.

For each $i \leq n$ let $l_i$ be $l/\gcd(l,d_i)$ and let $l_i'$ be
$l_i$ or $l_i/2$ according to whether $l_i$ is odd or even.  Likewise
define $l_\beta$ and $l'_\beta$ for positive roots $\beta$, and define
$l'$ to be $l$ or $l/2$ according to the parity of $l$.  Then in $\QQ(\s)$
$l'_\beta$ is the least natural number such that $[l'_\beta]_\beta=0$
and $l_\beta$ is the least such that $q_\beta^{l_\beta}=1$. Define
$\lattice_l=\{\lambda \in \lattice\,|\, \bracket{\lambda,\gamma}\in l
\ZZ,\quad \forall \gamma \in \lattice\}$. Notice in $U_\s$,  $E_\beta^{l'_\beta}=
F_\beta^{l'_\beta}=0$ and 
 $K_\lambda=1$ when $\lambda \in \lattice_l$ (in particular
 $K_i^{l_i}=1$). Therefore in $U_\s$ there are only finitely many
 distinct ideals $I_S$ in $\Ufull0$ and only finitely many distinct ideals
 $\Ik$, so the extension map completes by continuity to a map
 $\enrich\colon \Ufullcomplete \to U_\s$.

Define $u_\q$ to be the image of $\Uq \subset \Ufull$ (and hence of
$\Uqcomplete$) under $\enrich$, an algebra over
$\QQ(\s^L)=\QQ(\q)$. Likewise define $u_\s$ to be the image of
$\Uweight \subset \Ufull$.  Because extension is not exact these are
not the same as the extension of $\Uq$ and $\Uweight$ by $\QQ(\q)$
and $\QQ(\s)$ respectively.  In fact by
the previous paragraph $u_{\q}$ and $u_{\s}$  are finite dimensional.  The left degree and the even subalgebras
$u_{\s}^{\even}$ and $u_{\q}^{\even}$ are well-defined. 
It is straightforward to check that $u_\q$ and $u_\s$ are Hopf subalgebras.

\subsubsection{Small ribbon Hopf algebra} \label{p:ribbon}

The Hopf algebra $u_\s$ admits a ribbon (not just topological) Hopf
algebra structure with  
\begin{align*}
  \enrich(\Rlarge_0)&= \frac{1}{\abs{\lattice/\lattice_l}} \sum_{[\lambda], [\gamma]\in \lattice/\lattice_l} q^{-\bracket{\lambda,
      \gamma}} K_\lambda \tensor K_\gamma \\
  R&=\enrich(\Rlarge)= \enrich(\Rlarge_0) \sum_{\{\vec{r}|\, r_j< l_j'\}} \rcoeff{\vec{r}}
  E_{\divided{\vec{r}}} \tensor F_{\divided{\vec{r}}} \in u_\s^{\tensor 2}
\end{align*}
where $\rcoeff{\vec{r}}$ is as in \autoref{eq:largermatrix} and
$g=K_{2\rho}=\enrich(K_{2\rho}) \in u_\q$.

\begin{remark}
  $u_\q$ is essentially the same algebra as the one defined by
  \cite{Lusztig93} and other authors.  However, this $R$ matrix is not
  the same as the one defined by Rosso (\cite{Rosso90}) and
  \cite{Lusztig93}, which is in $u_\q$ not $u_\s$.  For most purposes
  these $R$ matrices function interchangeably, but here being the
  image of the $\Rlarge$-matrix for the large quantum group is a crucial
  property, while being in $u_\q$ will not prove important.
\end{remark}

\begin{proposition} \label{pr:phi}
  if $T$ is a bottom tangle (\autoref{p:tangle}) with all closed components
  labeled by elements of $U_\s^{*,\invariant}$, then
  % Agreement of the universal invariants
  \begin{gather}
    \enrich\parens{\universal_{\Ufull}(T) } =
    \universal_{u_\s}(T) \label{eq:large2small}
  \end{gather}
where the invariant functionals are  interpreted as the
restriction to $u_\s$ on the right side and the pullback by $\enrich$ on
the left.  
\end{proposition}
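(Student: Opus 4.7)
The plan is to show that $\enrich\colon \Ufullcomplete \to U_\s$ is a continuous homomorphism of ribbon Hopf algebras that carries $\Rlarge$ to $R$ and $g$ to $g$, and then to argue that any such map automatically intertwines the universal invariant since the construction of $\universal$ in \autoref{p:universal} is built entirely from multiplication, the antipode, $S^2$, the $R$-matrix, and the ribbon element.

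First I would check that $\enrich$ is a continuous algebra homomorphism. By construction (\autoref{p:small}) it is the continuous extension of scalar extension $\A_s \to \QQ(\s)$, and since multiplication on $\Ufullcomplete$ is uniformly continuous (as noted after \autoref{eq:tensor-basis}), the algebra structure passes through. The same continuity argument, applied to the structure maps $\Delta$, $\epsilon$, and $S$ set up in \autoref{eq:h-delta} and \autoref{eq:hopf}, gives that $\enrich$ is a Hopf homomorphism. For the ribbon structure, $\enrich(K_{2\rho}) = K_{2\rho}$ is immediate. For the $R$-matrix, the $E\tensor F$ part of $\Rlarge$ in \autoref{eq:largermatrix} collapses to a finite sum after applying $\enrich$ because $E_\beta^{\divided{r}}, F_\beta^{\divided{r}}$ vanish in $u_\s$ for $r \geq l_\beta'$, matching the formula of \autoref{p:ribbon}.

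Second I would verify that $\enrich(\Rlarge_0)$ equals the stated expression $\frac{1}{\abs{\lattice/\lattice_l}}\sum_{[\lambda],[\gamma]} q^{-\bracket{\lambda,\gamma}} K_\lambda \tensor K_\gamma$. In $u_\s$ one has $K_\lambda = 1$ whenever $\lambda \in \lattice_l$, so the image of $K_\lambda$ depends only on the class $[\lambda] \in \lattice/\lattice_l$, and only finitely many distinct idempotents $\blam$ survive under $\enrich$. A discrete Fourier transform on the finite abelian group $\lattice/\lattice_l$, using that $K_\gamma(\lambda) = q^{\bracket{\lambda,\gamma}}$ provides the dual pairing, expresses $\enrich(\blam)$ as a suitable average of $K_\gamma$'s; substituting into \autoref{eq:hrmatrix} then yields precisely the expression of \autoref{p:ribbon} (the sign flip in the exponent is accounted for by the inversion appearing in the Fourier transform).

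With these structural facts in hand, the intertwining is essentially formal. Choose a projection and state as in \autoref{p:universal} and compute $\universal_{\Ufull}(T)$. Each bead carries an element of $\Ufullcomplete$ produced by products of tensor factors of $\Rlarge^{\pm 1}$ and $g^{\pm 1}$, with possibly $S$ or $S^2$ applied; applying $\enrich$ on each component then reproduces the analogous assignment in $u_\s$ for the same state, and summing over states gives $\universal_{u_\s}(T)$, with convergence of the infinite sums assured by continuity of $\enrich$ and finiteness of the sum after extension. For the closed-component step (6), the pullback convention means the $i$th invariant functional on the left-hand side is $\phi_i \circ \enrich$, so the equality $\phi_i(\enrich(x)) = (\phi_i \circ \enrich)(x)$ closes the argument.

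The principal obstacle is the Fourier-transform identification of $\enrich(\Rlarge_0)$: one must confirm both the normalization $1/\abs{\lattice/\lattice_l}$ and the sign of the exponent, and check that the $R_0$ sum, which is literally infinite in $\Ufullcomplete^{\tensor 2}$, converges under $\enrich$ to the stated finite expression. Everything else reduces, via the topological ribbon Hopf structure developed in \autoref{ss:ribbon-hopf}, to observing that the mechanical procedure of \autoref{p:universal} respects ribbon Hopf homomorphisms.
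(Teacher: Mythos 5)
Your proposal takes essentially the same route as the paper's one-line proof: $\enrich$ is a homomorphism preserving the ribbon structure, and the universal invariant is built purely out of that structure (multiplication, $S$, $S^2$, the $R$-matrix, and the ribbon element), so it is automatically intertwined. The only remark worth making is that your stated ``principal obstacle'' --- the Fourier-transform identification of $\enrich(\Rlarge_0)$ --- is not actually needed here, since the paper \emph{defines} the $R$-matrix of $u_\s$ to be $R=\enrich(\Rlarge)$ in \autoref{p:ribbon}; the finite-sum expression given there is a re-description of the image rather than an independent definition that would have to be matched, so the proposition reduces to the formal functoriality point without any further computation.
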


\begin{proof}
   This is immediate from the fact that $\enrich$ is a homomorphism which
   preserves the ribbon structure.  
\end{proof}

\subsection{Representations, Invariant  Functionals and the Center}
\label{ss:reps}

\subsubsection{Representations}\label{p:reps}
From \cite{Rosso90,CP94} every  finite-dimensional representation of
$\Uq$ extended to an algebra over $\CC(q)$ is a finite direct sum of finite-dimensional
simultaneous eigenspaces for $\{K_i\}$ with $E_i$ and $F_i$ acting
nilpotently.  More precisely, for each  weight 
$\lambda$ in the
\emph{Weyl chamber} $\lattice^+$, i.e. $\lambda \in \lattice$ satisfying
$\bracket{\lambda+\rho,\alpha_i} > 0$ for all $i$, there is  the \emph{Weyl module}
$V_\lambda$, a representation of
 $\Uq$ over 
$\A_q$.  Every representation over $\CC(q)$ is a sum of simple representations which are
each a tensor product of a
one-dimensional representation with the extension to $\CC(q)$ of a Weyl module.  One can write
$V_\lambda = \bigoplus_{\mu \in \lattice} V_{\lambda, \mu} $ where
$K_iv= q_i^{\bracket{\alpha_i,\mu}}v$ for $v \in V_{\lambda,\mu}$ and
  $E^{\divided{r}}_\beta\colon V_{\lambda, \mu} \to V_{\lambda,
    \mu+r\beta}$, $F^{\divided{r}}_\beta\colon V_{\lambda, \mu} \to V_{\lambda,
    \mu-r\beta}$, and the only $\mu$ with nonzero contributions are in
  Weyl orbits of $\mu \in \lattice^+$ with $\lambda-\mu$ a sum of
  positive roots.  In fact the Weyl modules extend naturally to
  $\Ufull$ and $\Ufullcomplete$, while the nontrivial one-dimensional
  representations straightforwardly do not.  More generally 

  \begin{lemma}
    If $\lambda_1, \ldots, \lambda_m\in \fullchamber$ then the action of
    $\Uqm$ on $V_{\lambda_1} \tensor V_{\lambda_2} \tensor
    \cdots \tensor V_{\lambda_m}$ extends to a continuous action of
    $\Ufullcompletem$ on the corresponding module over
    $\Acomplete_s$.  In particular
    $\universal_{\Ufull}(T)$ for $T$ a bottom tangle with $m$ open components  acts on
    any $m$-fold tensor product of direct sums of Weyl modules.
  \end{lemma}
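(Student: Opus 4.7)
The plan is to extend the $\Uqm$-action on $V := V_{\lambda_1} \tensor \cdots \tensor V_{\lambda_m}$ first to the uncompleted (but already $\lattice$-adically complete on the Cartan side) algebra $\Ufullm$ via weight-space calculus, and then to extend continuously to $\Ufullcompletem$ using $\{k\}$-adic completeness of a suitable endomorphism ring.

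First, I would extend scalars so that $V$ becomes a finitely generated $\A_s$-module. By the standard theory recalled in \autoref{p:reps}, $V = \bigoplus_{\vec\mu \in S} V_{\vec\mu}$ for some finite set $S \subset \lattice^{\times m}$ of weights, with each $E_\beta^{\divided{r}}$ and $F_\beta^{\divided{r}}$ acting by shifting weights. I declare $\mathbf{K} \in U_{\full,0}^{\tensor m} = \Map(\lattice^{\times m}, \A_s)$ to act on $V_{\vec\mu}$ as multiplication by the scalar $\mathbf{K}(\vec\mu)$. On the Cartan generators $K_\lambda$ and $\Kinteger{K_i;m,n}$ this reproduces the existing $\Uqm$-action, so the original action extends to an action of $\Ufullm$ on $V$. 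Nilpotency of each $E_\beta$ and $F_\beta$ on the finite-dimensional $V$ forces only finitely many divided-power monomials to contribute nontrivially, so an arbitrary element of $\Ufullm$ written in the form \autoref{eq:tensor-basis} acts as a well-defined endomorphism: each $\mathbf{K}_j$ contributes only through its finitely many values on $S$.

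Second, I would address $\{k\}$-adic continuity. The key observation is that $\End_{\A_s}(V)$ is a finitely generated free $\A_s$-module, so its $\{k\}$-adic completion is simply $\End_{\A_s}(V) \tensor_{\A_s} \Acomplete_s \iso \End_{\Acomplete_s}(V \tensor_{\A_s} \Acomplete_s)$. Since the map $\Ufullm \to \End_{\A_s}(V)$ constructed above is $\A_s$-linear (so sends $\{k\}\Ufullm$ into $\{k\}\End_{\A_s}(V)$), it extends uniquely and continuously to $\Ufullcompletem \to \End_{\Acomplete_s}(V \tensor_{\A_s} \Acomplete_s)$, yielding the claimed continuous action.

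The ``in particular'' statement is then immediate from \autoref{th:universal}, which places $\universal_{\Ufull}(T) \in \Ufullcompletem$: it acts on any finite tensor product of Weyl modules as above, and distributes harmlessly over direct sums. The main subtlety to verify is that the two completions defining $\Ufullcompletem$ — the $\lattice$-adic one on the Cartan part, and the $\{k\}$-adic one on coefficients — really combine into a product topology compatible with the target $\End_{\Acomplete_s}$. Once one observes that on the Cartan side the action factors through the finite-rank quotient $\Map(S, \A_s)$ (trivially continuous), and on the coefficient side $\End_{\A_s}(V)$ is finitely generated free, the compatibility is bookkeeping rather than a serious obstacle.
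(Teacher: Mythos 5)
Your proposal is correct and follows the same strategy as the paper's proof: define the Cartan action on the weight decomposition (your $\mathbf{K} \mapsto \mathbf{K}(\vec\mu)$ is exactly the paper's $[\gamma_1]\tensor\cdots\tensor[\gamma_m]$ acting as Kronecker deltas on weight spaces), note that only finitely many such idempotents act nontrivially so the $\lattice$-adic topology is respected, and then pass to the $\{k\}$-adic completion over $\Acomplete_s$. You supply slightly more detail than the paper on the last step by invoking the finite free $\A_s$-module $\End_{\A_s}(V)$, but the argument is essentially identical.
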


  \begin{proof}
    Define the action of $U^{\tensor m}_{\full,0}$ on  $V_{\lambda_1} \tensor V_{\lambda_2} \tensor
    \cdots \tensor V_{\lambda_m}$ by declaring  $[\gamma_1] \tensor \cdots \tensor [\gamma_m]$ to act as $\delta_{\gamma_1,\mu_1} \cdots \delta_{\gamma_m,\mu_m}$ on $V_{\lambda_1,\mu_1} \tensor 
    \cdots \tensor V_{\lambda_m,\mu_m}$.  This  defines uniquely an action of $\Ufullm$ on $V_{\lambda_1} \tensor V_{\lambda_2} \tensor
    \cdots \tensor V_{\lambda_m}$ which extends the action of $\Uq^{\tensor m}$.  Since all but finitely many $[\gamma_1] \tensor \cdots \tensor [\gamma_m]$ act as $0$, this action is continuous.  Finally we extend the action to $\Ufullcompletem$ over $\Acomplete_s$.
  \end{proof}

\subsubsection{Representations of the small quantum group}\label{p:smallreps}
 
  Passing to a root of unity, one can extend the module $V_\lambda$ 
  over $\A_s$ or $\A_q$  to a module over $\QQ(\s)$ or $\QQ(\q)$ and it
  becomes a representation of the image of $U_\s$, $u_\s$ and $u_\q$,
  all with the same weight decomposition and all also called
  $V_\lambda$. Tensor products of such representations are
  representations of the appropriate tensor power of algebra (without
  need of completion) and therefore of the original algebra through
  $\Delta$.    If $\lambda$ is in the \emph{Weyl
    alcove} $\lattice^l$, i.e. it is in the Weyl chamber $\fullchamber$ and satisfies
  $\bracket{\lambda+\rho, \phi} \leq l'$ where $\phi$ is the long
  positive root if $\max(d_i) | l'$ and the short positive root
  otherwise, then it is a simple representation and is the orbit of
  the highest weight vector in $u_\s$ and $u_\q$ \cite{Sawin06a}.

\subsubsection{Quantum traces} \label{p:qtr}
  In particular for $\lambda \in \fullchamber$, define $\qtr_\lambda(a)= \tr_\lambda(K_{2\rho}a)$
  where $\tr_\lambda$ is the trace in the representation $V_\lambda$ (\autoref{p:reps}).
  This is an invariant functional, and thus the universal invariant
  (\autoref{p:universal}) for
  $\Ufullcomplete$ is defined for bottom tangles (\autoref{p:tangle}) where some closed
  components are labeled by $\qtr_\lambda$.

\subsubsection{The Drinfel'd map} \label{p:drinfeld}
  Recall $\clasp \in U^{\tensor 2, \invariant}$ is the universal
  invariant (\autoref{p:THA-definition})
   of the clasp tangle in
  \autoref{fg:some-tangles}.  In particular note that for $\Ufull$
  \begin{align}
    \clasp&=\sum_{\vec{r},\vec{s}}\rcoeff{\vec{r}}\rcoeff{\vec{s}} q^{-2\bracket{\rho,\theta_{\vec{s}}}}
    \sum_{\gamma} 
    F_{\divided{\vec{r}}} K_{2\gamma-\theta_{\vec{s}}}E_{\divided{\vec{s}}} \tensor
    F_{\divided{\vec{s}}} \bgam K_{\theta_{\vec{r}}}E_{\divided{\vec{r}}} \label{eq:clasp-left}\\
    &=\sum_{\vec{r},\vec{s}}\rcoeff{\vec{r}}\rcoeff{\vec{s}} q^{-2\bracket{\rho,\theta_{\vec{s}}}}
    \sum_{\lambda} 
    F_{\divided{\vec{r}}} \blam K_{\theta_{\vec{s}}}E_{\divided{\vec{s}}} \tensor
    F_{\divided{\vec{s}}} K_{2\lambda} K_{-\theta_{\vec{s}}}E_{\divided{\vec{r}}}. \label{eq:clasp-right}
  \end{align}
  and if $\clasp_-$ is the universal invariant of the clasp tangle with all crossings reversed then
  \begin{align}
    \clasp_-&=\sum_{\vec{r},\vec{s}}\rcoeff{\vec{r}}\rcoeff{\vec{s}} q^{-\bracket{\theta_{\vec{s}},\theta_{\vec{s}}}}
    \sum_{\gamma} 
    S\parens{E_{\divided{\vec{r}}}} \bgam K_{-\theta_{\vec{s}}}S\parens{F_{\divided{\vec{s}}}} \tensor
    E_{\divided{\vec{s}}}  K_{-\theta_{\vec{s}}} K_{-2\gamma} F_{\divided{\vec{r}}} \label{eq:clminus-left}\\
    &=\sum_{\vec{r},\vec{s}}\rcoeff{\vec{r}}\rcoeff{\vec{s}}  q^{-\bracket{\theta_{\vec{r}},\theta_{\vec{r}}}}
    \sum_{\lambda} 
    S\parens{E_{\divided{\vec{r}}}}  K_{-2\lambda} K_{\theta_{\vec{r}}} F_{-\divided{\vec{s}}} \tensor
    E_{\divided{\vec{s}}} K_{\theta_{\vec{s}}} \blam F_{\divided{\vec{r}}}. \label{eq:clminus-right}
  \end{align}

$C$
  defines an algebra homomorphism called the \emph{Drinfeld map} from
  invariant functionals to the center sending each $\psi \in
  \overline{U}^{*,\invariant}$ to
  % Drinfeld map
  \[\DD\sbrace{\psi}=(1 \tensor \psi)(\clasp) \in \overline{U}^{\invariant}.\]%

Notice in the special case of $\Ufull$ the image of $\DD$ is invariant
elements of the even part of $\Uweightcomplete$.

\subsubsection{The even center}\label{p:zlambda}
  If $\lambda \in \fullchamber$ define $z_\lambda \in
  \Ufull^{\invariant}$ or in $u_\s^{\invariant}$ by
  % the central element z_\lambda
  \begin{gather}
    \label{eq:zlambda}
    z_\lambda= \DD\sbrace{\qtr_\lambda}.
  \end{gather}%
  In fact by \autoref{eq:clasp-left} $z_\lambda \in U_\weight^{\even,
   \invariant}$ and if $\theta \in \lattice_r\cap \lattice^+$ then
  $z_\theta \in \Uq^{\even,\invariant}$. Define $Z=\Uq^{\invariant}$
  and $Z^\even=\Uq^{\invariant,\even}$.

  In particular  in $u_\s$, $z_\lambda = (1 \tensor
  \qtr_\lambda)\universal_{u_\s}(\clasp)=( 1 \tensor \qtr_\lambda)
  \enrich\parens{\universal_{\Ufull}(\clasp)} = \enrich(z_\lambda).$

\subsubsection{The Harish-Chandra map} \label{p:HC}
  \autoref{eq:q-tensor-basis}  implies any element of $\Uqcomplete$ can be written as a
  sum $F_{\divided{\vec{r}}} H_{\vec{r},\vec{s}} E_{\divided{\vec{s}}}$ with
  $H_{\vec{r},\vec{s}} \in \Uqz$ and for each $k,i$ all
   but finitely many $\vec{r},\vec{s}$ have $H_{\vec{r},\vec{s}}$
   in $\Ik$.  Thus one gets the uniformly continuous Harish-Chandra
  map $\HC \colon \Uqcomplete \to \Uqzcomplete$ which sends each element of $\Uqcomplete$
  to  $H_{\vec{0},\vec{0}}$.  It is easy to
  check that this map is an algebra homomorphism on the center.  De
  Concini and Kac \cite{DeconciniKac90} prove that this map acting on
  $\Uq \tensor_{\A_q} \CC(q)$ (which \cite{CP94} calls the rational
  form) is a bijection from
  the center  to the translated Weyl-group-invariant portion of the
  Cartan subalgebra (\autoref{p:lie-alg}).

\begin{lemma}\label{lm:zeven}
The left degree $0$ part $Z^{\even}$ of the center $Z$ of $\Uq$ is
spanned by $\{z_\theta\}_{\theta \in \lattice_r}$. 
The left degree $0$ part $\widehat{Z}^{\even}$ of the center $\widehat{Z}$ of $\Uqcomplete$ is
spanned topologically by the $\{z_\theta\}_{\theta \in \lattice_r}$.
\end{lemma}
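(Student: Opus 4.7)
My plan is to prove the lemma via the Harish-Chandra map $\HC$ of \S\ref{p:HC}, combining its injectivity on the center (De Concini-Kac) with classical Weyl character theory. The containment $\{z_\theta\}_{\theta \in \lattice_r \cap \fullchamber} \subseteq Z^{\even}$ was already observed in \S\ref{p:zlambda}, so the content of the lemma is the reverse containment and the topological analogue.

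First I would compute $\HC(z_\theta)$ explicitly. Expanding $z_\theta = (1 \tensor \qtr_\theta)(\clasp)$ via \autoref{eq:clasp-left}, projection of the first tensor factor onto $\Uqz$ forces $\vec{r} = \vec{s} = \vec{0}$, and evaluating the quantum trace on the remaining second factor yields
\[
\HC(z_\theta) \;=\; \sum_{\gamma} q^{\bracket{2\rho,\, \gamma}}\, (\dim V_{\theta,\gamma})\, K_{2\gamma},
\]
summed over weights $\gamma$ of $V_\theta$, all of which lie in $\theta + \lattice_r \subseteq \lattice_r$. Viewing $\Uqz$ as functions on $\lattice$, the value of this element at $\mu$ is the Weyl character $\chi_\theta$ evaluated at $q^{2(\rho+\mu)}$, which is manifestly translated-Weyl invariant in $\mu$; the unique highest-dominant-weight term is $q^{2\bracket{\rho,\theta}} K_{2\theta}$, the remaining terms involving $K_{2\mu}$ for $\mu$ strictly below $\theta$ in the dominance order.

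Next I would establish spanning. After tensoring with $\CC(q)$, De Concini-Kac identifies $Z \tensor \CC(q)$ with the translated-$W$-invariants inside $\Uqz \tensor \CC(q) \cong \CC(q)[\lattice_r]$, and classical Weyl character theory exhibits $\{\chi_\theta\}_{\theta \in \lattice_r \cap \fullchamber}$ as a basis of the $W$-invariants of $\ZZ[\lattice_r]$. Combined with the triangular leading-term structure above, this lets one invert the change of basis with coefficients in $\A_q$ (powers of $q$ and integers) and conclude that $\{z_\theta\}$ already span $Z^{\even}$ over $\A_q$. For the topological statement, any $z \in \widehat{Z}^{\even}$ can be written as $\sum_k \{k\} z_k$ with $z_k \in Z^{\even}$, since both centrality and left-degree-zero are preserved by the $\{k\}$-adic topology; expressing each $z_k$ via the finite-dimensional result and regrouping produces a convergent topological sum $\sum_\theta c_\theta z_\theta$, with convergence guaranteed by the fact that $\rcoeff{\vec{r}} \in \Ik$ once any $r_j$ is large enough (so only finitely many $z_\theta$ have nonzero image modulo $\Ik$).

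The main obstacle I anticipate is the integrality step: De Concini-Kac is customarily stated over $\CC(q)$, so one must justify that the triangular inversion keeps coefficients in $\A_q$ rather than its field of fractions, which amounts to showing the leading coefficients $q^{2\bracket{\rho,\theta}}$ are units in $\A_q$ (they are, being powers of $q$) and carefully tracking the dominance order. A secondary subtlety is verifying that $\widehat{Z}^{\even}$ is genuinely the $\{k\}$-adic completion of $Z^{\even}$, i.e.\ that every element of the completed center can be approximated by actually central, actually even elements, which requires continuity of the projection to the center and to left-degree zero under the $\{k\}$-adic topology.
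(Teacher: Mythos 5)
Your approach to the first half of the lemma follows the paper's quite closely: both compute $\HC(z_\theta)$ using \autoref{eq:clasp-left} and the quantum trace, appeal to De Concini--Kac for the Harish-Chandra isomorphism on the rational form, and use the triangular structure in the dominance order (with unit leading coefficients $q^{2\bracket{\rho,\theta}}$, or $1$ in the paper's normalization) to conclude spanning over $\A_q$. Your formula for $\HC(z_\theta)$ is arguably more careful than the paper's, which omits the factors $q^{\bracket{2\rho,\gamma}}$ coming from $K_{2\rho}$ in the quantum trace; this discrepancy is cosmetic since those factors are units and do not affect triangularity. So the finite-rank part of your proof is sound.

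There is, however, a genuine gap in your treatment of the completed statement. You assert that any $z\in\widehat{Z}^{\even}$ can be written as $\sum_k \{k\}z_k$ with $z_k\in Z^{\even}$, ``since both centrality and left-degree-zero are preserved by the $\{k\}$-adic topology.'' This is circular: that $\widehat{Z}^{\even}$ coincides with the $\{k\}$-adic completion of $Z^{\even}$ is precisely the hard content of the second sentence of the lemma, and it is not automatic. Elements of $\Uqcomplete$ central in $\Uqcomplete$ need not be approximable modulo $\Ik$ by elements central in $\Uq$ --- there is no multiplicative projection onto the center to appeal to, and the center of a completion is in general larger than the completion of the center. You do flag this subtlety at the end but leave it open. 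The paper fills exactly this gap: it proves directly that $\HC(\widehat{Z})$ lands in the translated-Weyl-invariants by letting $\Uqcomplete$ act on explicit quotient modules (generated over the ideal $\langle E_i, k-\lambda(k)\rangle$) and observing that the scalar by which $z$ acts on the Verma-type module and on the highest-weight submodule generated by $F_i^{n+1}$ are related by a reflection. Only after establishing translated-Weyl-invariance of $\HC(\widehat{Z})$ can one conclude, modulo each $\Ik$, that $\HC(z)$ agrees with $\HC(z')$ for some $z'$ in the $\A_q$-span of $\{z_\theta\}$, and hence that $z$ is approximated by $Z^{\even}$. Your proof would need this module-theoretic argument (or an equivalent) to close.
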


\begin{proof} Since
  the Harish-Chandra map preserves left degree (\autoref{p:divided}) it is also a bijection
  when restricted to a map between the  even parts of the
  center and of the translated  Weyl-invariant Cartan algebra of the rational form.   Check
  following Jantzen \cite{Jantzen96} that $\HC(z_\lambda)= \sum_{\mu}
  \dim_\lambda(\mu) K_{2\mu}$, where $\dim_\lambda(\mu)$ is the
  dimension of the weight $\mu$ subspace of $V_\lambda$.  Clearly for
  each $\theta \in \lattice_r \cap \lattice^+$ one can form a linear combination
  $z'_\theta$ of $z_\mu$ for $\mu \leq \theta$ such that
  $\HC(z_{\theta}')= \sum_{\sigma \in W}
  K_{\sigma(\theta)}$, which implies that the $z_{\theta}'$ and hence
  $z_\theta$  span the even
  part of the center of the rational form.  Since each $z_\theta \in
  Z^{\even}$, it follows that $Z^{\even}$ is spanned by these
  elements in $\Uq$.

Given $\lambda \in \lattice$ consider the right ideal in $\Uqcomplete$
generated by $\{E_i\}$ and by $k-\lambda(k)$ for each $k \in
\Uqzcomplete$. $\Uqcomplete$ acts on the left on the quotient by this ideal.  Notice
that $z \in \overline{Z}$ acts by multiplication by the scalar
$\lambda(\HC(z))$, since it acts on the image of $1$ 
as that.  On the other hand if $\bracket{\alpha_i, \lambda}=n>0$ then
the image of $F_i^{n+1}$ in this module is easily recognized to be a
highest weight module, and therefore $Z$ acts on it by multiplication
by $\sigma_i \lambda (\HC(z))$, where $\sigma_i \lambda = \lambda - (n+1)
\alpha_i$ is the image of $\lambda$ under the appropriate generator
of the translated Weyl group.  This is to say that $\lambda$ and
$\gamma$ agree on $\HC(z)$ if they are linked by an element of the
translated Weyl group.  Since $K \in \Uqzcomplete=0$ if all
$\lambda(K)=0$ for $\lambda \in \lattice$ (because this is true for
$\Uqz$) this implies that $\HC(\widehat{Z})$ is invariant under the translated 
Weyl group action. 

Therefore modulo any $\Ik$, $\HC(z)$ for $z \in \widehat{Z}^\even$
is equivalent to an invariant 
element of $\Uqz$, which is $\HC(z')$ for $z'$ a linear combination of
$\{z_\theta\}$, and therefore $z$ is equivalent to $z'$ modulo
$\Ik$.  So $\widehat{Z}^\even$ is in fact the completion of $Z^\even$.
\end{proof}

\begin{remark}
The argument above readily generalizes to show that the even part of
the center of $\Uweightcomplete$ is spanned by $z_\lambda$ for
$\lambda \in \lattice$.  This fact will not be important in this paper.
 \end{remark}

\section{Chern-Simons and Hennings Invariants}

For this section fix a positive integer $l$  and hence roots of unity
$\s$ and $\q$ (\autoref{p:small}).  Most of this section will work with $U_\s$, $u_\s$ and
$u_\q$, but in some cases conclusions will be drawn about $\Ufull$.
  \subsection{The CS and Hennings Invariant}
\label{ss:cs-hen}

\subsubsection{The Chern-Simons invariant  functional} \label{p:cs}
  Let $\lattice_H$ be a sublattice of $\lattice$ containing $\lattice_r$
  ($\lattice/\lattice_r$ is cyclic except in the case $D_4$.  In that
  case do not consider the case
  $\lattice_H=\lattice_r$). Let $\lattice^l_H$ be the intersection of the Weyl
  alcove with $\lattice_H$, i.e. the set of all
  $\lambda \in \lattice^+_H$  such that $\bracket{\lambda+ \rho,\phi}
  \leq l'$ where $\phi$ is the long positive root if $\max(d_i)|l'$ and
  the short positive root otherwise.   Define the
  invariant functional 
  % the functional for the CS invariant
  \begin{gather}
    \label{eq:cs-label}
    \cs_H = \sum_{\lambda \in \lattice^l_H} \qtr_\lambda(1) \qtr_\lambda.
  \end{gather}%
  This expression can be interpreted as an invariant functional on
  $\Ufull$, $U_\s$ or $u_\s$, and the map $\enrich$ intertwines these
  functionals.  The quantum Racah formula \cite{Sawin06a}[Cor. 8]  shows that if $\phi$ is a linear
  combination of quantum traces $\qtr_\lambda$ for $\lambda \in
  \lattice_H^l$ then in $U_\s$  the 
  handle-slide condition holds
  % cs handleslide
  \begin{gather}
    \label{eq:cs-handleslide}
   \sbrace{ \phi  \cs_H }(x)= \phi(1) \cs_H(x)
  \end{gather}%
  \begin{figure}[ht]
    \centering
    \includegraphics[width=5.5in]{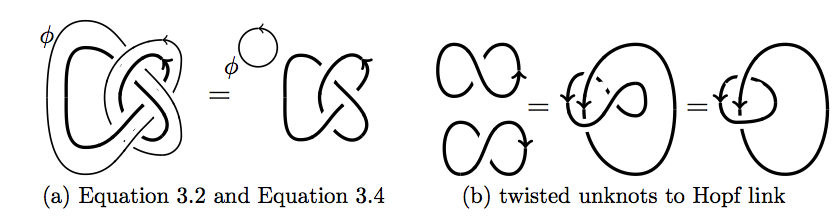}
    \caption{Handle slides - thick components are labeled by
    $\cs_H$ or $\hen$} \label{fg:handle-slide}
    \end{figure}
which on the level of invariants says that any component labeled by a
linear combination of quantum traces in the sublattice can slide through a component
labeled by $\cs_H$ as in \autoref{fg:handle-slide}.
  Consider any pair $l,H$ such that the invariant $\cs_H\parens{\DD[\cs_H]}
  \cs_H(\clasp)$ of the Hopf link with both components labeled by $\cs_H$
  is nonzero.  \autoref{eq:cs-handleslide} then implies that
  $\cs_H(\twist^{\pm  1})$ are nonzero.
  \cite{Sawin02a,Sawin06b,Sawin06a} 
  identify a large array of pairs for which this holds, including all
  the cases corresponding to levels and Lie groups identified by
  Dijkgraaf and Witten \cite{DW90} as admitting a Chern-Simons field
  theory.   In 
  particular this holds when $l$ is divisible by $2 \max(d_i)$ and
  $H=\{0\}$.  In any such case 
  \begin{gather}
    \label{eq:cs-tmi}
    \cs_H^{\tensor m}\parens{\universal_{u_{\s}}(T)} \cs_H(\twist)^{-\sigma_+} \cs_H(\twist^{-1})^{-\sigma_{-}}
  \end{gather}%
  where $\sigma_\pm$ are the number of positive and negative
  eigenvalues of the linking matrix, is an invariant of the
  three-manifold given by surgery on the closure of an open bottom
  tangle $T$. 

  Observe that $\qtr_\lambda$ can be defined as an invariant  functional
  on the small and large quantum group and that these are preserved by
  $\enrich$.  Thus $\cs_H$ can be interpreted as an invariant functional on
  either quantum group.  In particular  $\cs_H^{\tensor
    m}\parens{\universal_{\Ufull}(T)}$ makes sense in
  $\Acomplete_s$, and when extended to $\CC(\s)$ agrees with
  $\cs_H^{\tensor m}\parens{\universal_{u_{\s}}(T)} $ and thus
  \autoref{eq:cs-tmi} with this replacement defines the same
  three-manifold invariant.

\subsubsection{The Hennings invariant  functional} \label{p:hennings}
  Let $\vec{r}_{\text{max}}= (l'_{\beta_1}-1, \ldots,
      l'_{\beta_N}-1)$ and $\theta_{\text{max}}= \sum_j (l'_{\beta_j}-1)
      \beta_j$.  Notice that in $u_\s$ and $u_\q$ we have 
      $E_iE_{\divided{\vec{r}_{\text{max}}}}=E_{\divided{\vec{r}_{\text{max}}}}E_i=0$ for
      all $i$.  The small quantum group $u_{\s}$ has a \emph{left
        integral} \cite{Hennings96}  which is an
  invariant functional 
  % The Hennings invariant functional
  \[\hen\parens{F_{\divided{\vec{r}}}K_\theta E_{\divided{\vec{s}}}}=
  \begin{cases} 1 & \text{ if } \vec{r}=\vec{s} =
     \vec{r}_{\text{max}} \text{ and } \theta =  \theta_{\text{max}}\\
      0 & \text{ else.}
    \end{cases}\]%
  with again the handleslide property the property that
  % left integral
  \begin{gather}\label{eq:hen-handleslide}
\sbrace{\phi \hen}(x)= \phi(1) \hen(x)
\end{gather}%
  for $\phi \in u_{\s}^*$.  An easy calculation gives that the
  invariant of the Hopf link
  $\hen\parens{\DD\sbrace{\hen}}$, and therefore $\hen(\twist^{\pm 1})$ are
  all nonzero, and thus once again 
  \begin{gather}
    \label{eq:hen-tmi}
    \hen^{\tensor m}\parens{\universal_{u_{\s}}(T)} \hen(\twist)^{-\sigma_+} \hen(\twist^{-1})^{-\sigma_{-}}
  \end{gather}
 is an invariant of the
  three-manifold given by surgery on the closure of  $T$. 
%
%%%%%%%%%%%%%%%%%%%%%%

\subsection{The Universal Invariant of ZLMTs}

A  \emph{zero linking matrix tangle} or ZLMT is a  bottom tangle in which the
self-linking number of each open component and the linking number between
each pair of components are all zero, and in which each closed
component is labeled by an invariant functional (if there are no
closed components call it an \emph{open ZLMT}).  These are also called algebraically split tangles.  On the one hand, this section
argues that this condition restricts the possible values of the
universal invariant (\autoref{p:universal}) to even elements of $\Uqcomplete$.  On the other hand, the universal invariant of such tangles can
be used to compute the CS and Hennings invariants
(\autoref{p:cs},\autoref{p:hennings}) of homology
three-spheres.

Writing out a state of the universal $\Ufullcomplete$ invariant of a projection of
a ZLMT, the algebra of the tensor $\Rlarge_0$ of 
\autoref{eq:hrmatrix} and its relation with the other elements of the
state are quite straightforward and one sees readily that the result
is a tensor product of elements of $\Uqcompletem$.  More
care is required to see that the universal invariant is in fact even
in the sense of the previous section.

\begin{figure}[ht]
  \centering
  \includegraphics[width=4.5in]{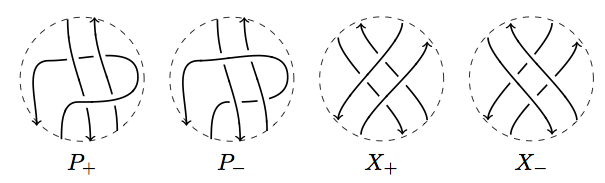}
\caption{fragments generating ZLMTs}
\label{fg:generators}
\end{figure}

\begin{lemma}
  \label{lm:ZLMT}
  If $T$ is an open zero linking matrix tangle with $m$ components, then $T$ has a presentation that is without crossings except for some number of fragments that look like $P_\pm$, $X_\pm$ as in \autoref{fg:generators}, where in each case the doubled strand always represents pieces of the same component.  Further, the total rotation of each component from beginning to end is $(4n+1) \pi$.
\end{lemma}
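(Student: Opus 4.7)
My plan is to begin with an arbitrary projection of $T$ and reduce it to the desired normal form using framed Reidemeister moves, exploiting the vanishing of all linking numbers. I would proceed in roughly three stages.

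First, I would take a generic planar projection of $T$ and classify its crossings as self-crossings (both strands from one component) or inter-component crossings. By the ZLMT hypothesis, for each pair of distinct components the algebraic count of inter-component crossings vanishes, and the self-linking of each component (writhe plus framing twist) is zero as well. I would pair up the positive and negative inter-component crossings between each fixed pair of components, and for each such pair apply an explicit local isotopy to combine them into a $P_\pm$ clasp fragment: the isotopy drags two pieces of one component together so they form a doubled strand within the fragment, while two pieces of the other component simultaneously double up to form the clasp's opposite band, ensuring each doubled strand within the fragment sits on a single component. Any surplus self-crossings created in the rerouting can then be shepherded to isolated locations and each becomes an $X_\pm$ fragment.

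Second, for the rotation statement, I would use the fact that each open component of a bottom tangle has tangent pointing in the $+z$ direction at its initial endpoint and in the $-z$ direction at its final endpoint, which already forces the total tangent rotation in the plane to be an odd multiple of $\pi$. Each $X_\pm$ or $P_\pm$ fragment contributes a full integer multiple of $2\pi$ to this rotation, hence the planar arcs connecting fragments must absorb an odd multiple of $\pi$ as well. Refining $(2n+1)\pi$ to $(4n+1)\pi$ is then a parity count: the zero self-linking condition constrains the number of $X_\pm$ kinks of each sign relative to the framing data, and this second parity pins down the rotation modulo $4\pi$.

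The hard part will be the inter-component reduction step: showing that each opposite-sign pair of inter-component crossings can actually be traded for a $P_\pm$ fragment using only framed Reidemeister moves, without generating further uncontrolled inter-component crossings. This is essentially a clasper-calculus move, and I expect to verify it by exhibiting the explicit isotopy once and then noting that successive reductions can be performed independently, provided one has first matched positive and negative crossings using the vanishing linking hypothesis. The main bookkeeping burden is to confirm that this matching-and-reduction procedure terminates and produces only the allowed fragment types.
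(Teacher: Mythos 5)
Your approach diverges substantially from the paper's, which simply invokes Habiro's structural theorem (\cite{Habiro06}, Cor.~9.13): every open ZLMT is a product $WB^{\tensor k}$ of copies of the Borromean tangle $B$ with a word $W$ in a fixed finite list of generating tangles ($1_b,\mu_b,\eta_b,\gamma_\pm,\psi_{b,b}^{\pm 1}$), each of which visibly admits a presentation built from $P_\pm$ and $X_\pm$ fragments. You are, in effect, trying to reprove Habiro's result from first principles by direct manipulation of a projection, and that is where the genuine gap lies. Vanishing linking numbers let you pair positive and negative inter-component crossings \emph{algebraically}, but pairing them \emph{geometrically} into a $P_\pm$ clasp is not a local isotopy: to drag one arc of a component across the diagram to meet its partner you must in general pass through the rest of the tangle, generating new inter-component crossings you have no control over (the Whitehead clasp is exactly the shape of the obstruction, and it is not removable by Reidemeister moves even though the linking number is zero). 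Your plan to ``exhibit the explicit isotopy once and note that successive reductions can be performed independently'' fails precisely because the reductions are not independent; controlling this interference is the hard content of Habiro's corollary, which rests on a clasper/surgery calculus argument, not on Reidemeister-move bookkeeping. You acknowledge this is ``essentially a clasper-calculus move,'' but you would need the full force of that theory, at which point you are simply redoing the citation.

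On the rotation count, your first observation is sound: the bottom-tangle boundary conditions force the total planar rotation of each open component to be an odd multiple of $\pi$. But the refinement from $(2m+1)\pi$ to $(4n+1)\pi$ is left too vague (``a parity count... pins down the rotation modulo $4\pi$''). The paper closes this cleanly: cap off the component with a $\pi$-rotation cup to get a closed curve of winding number $m+1$ and writhe equal to the (zero) self-linking of the component, then invoke the fact that winding number and writhe of a generic closed plane curve have opposite parity \cite{Sawin02b}; hence $m+1$ is odd and $m$ is even. If you want to keep your own route, you should replace the appeal to ``the number of $X_\pm$ kinks of each sign'' with this winding-number/writhe parity relation, which is what actually makes the $\bmod\ 4\pi$ statement go through.
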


\begin{proof}
  According to \cite{Habiro06}[Cor. 9.13], any open  ZLMT can be written as a product
$WB^{\tensor k} $, where $B^{\tensor k}$ is a horizontal product of the Borromean
tangle $B$ in \autoref{fg:borromean}, and $W$ is a horizontal and
vertical product of the tangles $1_b,\mu_b$, $\eta_b$, $\gamma_+$,
$\gamma_-,\psi_{b,b}, \psi_{b,b}^{-1}$ in \autoref{fg:base-tangles}.  It is apparent in \autoref{fg:borromean} that the Borromean tangle can be decomposed into this form, and clearly those in \autoref{fg:base-tangles} are already in this form.  For the last sentence, notice this is equivalent to the statement that if we complete the component to a closed component by adding a rotation $\pi$ cup the winding number is odd.  But the winding number and the self-linking number (writhe) are of opposite parity \cite{Sawin02b}, and the writhe of each  component of a ZLMT is $0$.
\end{proof}

\begin{proposition} \label{pr:ZLMT}
If $T$ is an open zero linking matrix tangle with $m$ components, then
$\universal_{\Ufull}(T) \in
\Uqcomplete^{\even,\invariant,\tensor m}$.
\end{proposition}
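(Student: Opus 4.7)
The plan is to combine the geometric decomposition of an open ZLMT provided by \autoref{lm:ZLMT} with the algebraic weak-evenness criterion of \autoref{lm:weak}. Since \autoref{th:universal} already establishes that $\universal_{\Ufull}(T)$ lies in $\Ufullcompletem^{\invariant}$, the real work is to show two things: that $\universal_{\Ufull}(T)$ in fact lies in the restricted completion $\Uqcompletem$, and that it is even in the relevant sense.

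First I would compute $\universal_{\Ufull}(T)$ using the presentation from \autoref{lm:ZLMT}, in which the only crossings appear inside fragments of type $P_\pm$ or $X_\pm$ and in each such fragment both participating strands belong to the same component. When the beads on those two strands are combined along the shared component, the $R$-matrix factor is closed up through a return arc which, by Property~2 of \autoref{p:properties}, inserts an antipode on one leg. The net contribution of each fragment to its component is therefore an image of the form $A(x) = (1 \tensor S)\Delta(x)$ applied to $R$-matrix data, supported on a single tensor factor. Globally, $\universal_{\Ufull}(T)$ is expressible as $A^{\tensor k}$ applied to an element of $\Ufullcompletem$ built from divided powers $E_{\divided{\vec r}}$, $F_{\divided{\vec r}}$, and the Cartan tensors arising from $R_0$.

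Next I would invoke the zero linking condition. The Cartan tensors $R_0 = \sum_{\lambda,\gamma} q^{\bracket{\lambda,\gamma}}\blam \tensor \bgam$ from the crossings between two components accumulate with net weight coefficients indexed by the pairwise linking number; vanishing of the linking matrix forces the surviving Cartan accumulations on each component to lie in $\lattice_r$, which places the result in $\Uqcompletem$ rather than merely in $\Uweightcompletem$. Similarly the rotation parity $(4n+1)\pi$ from \autoref{lm:ZLMT} controls the count of framing insertions $g^{\pm 1} = K_{\pm 2\rho}$ along each component, and combined with the fact that $2\rho$ is the sum of positive roots and hence lies in $\lattice_r$, each component's net $g$-contribution has left degree zero modulo $2\lattice_r$. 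With these reductions, \autoref{lm:weak} (applied with $n_1 = n_2 = 0$) shows that the $A^{\tensor k}$-image is weakly even, and combined with the per-component degree bookkeeping obtained from the fragment-by-fragment assembly, this places $\universal_{\Ufull}(T)$ in $\Uqcompletem^{\even,\invariant}$.

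The main obstacle is the bookkeeping in the middle step: carefully verifying that the $R_0$ Cartan contributions telescope under zero linking to $\lattice_r$-weighted terms on each component, that the framing $g^{\pm 1}$ insertions align with the stated rotation parity to produce left degree zero modulo $2\lattice_r$ per factor, and that the fragment-by-fragment computation really exhibits the $A^{\tensor k}$-structure needed for \autoref{lm:weak}. This parallels the argument of \cite{HabiroLe16}[Prop.~7.2] that motivates \autoref{lm:weak} in the first place, but has to be redone in the present setup with $\A_s$-coefficients, the left-degree grading, and the particular completion of $\Ufull$ introduced in \S\ref{ss:ribbon-hopf}.
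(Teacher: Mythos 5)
Your outline correctly identifies the two key lemmas — the presentation from \autoref{lm:ZLMT} and the weak-evenness criterion of \autoref{lm:weak} — and the general strategy of organizing the universal invariant by fragment so that each component's bead product is weakly even. But several details are off in a way that would break the argument if carried out.

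First, you propose to apply \autoref{lm:weak} with $n_1 = n_2 = 0$, i.e.\ to exhibit the labels on each component as lying entirely in $A^{\tensor k}[\Ufullcompletem]$. This cannot work: the framing insertions $g^{\pm 1} = K_{\pm 2\rho}$ from the extrema of each component are not in the image of $A=(1\tensor S)\Delta$, and $K_{2\rho}$ may well be an \emph{odd} element (its left degree is $-2\rho$, and $2\rho$ need not lie in $2\lattice_r$). The paper instead uses the $(4n+1)\pi$ rotation condition of \autoref{lm:ZLMT} precisely to guarantee that the number of such odd $K_{2\rho}$ insertions on each component is even, feeding into $n_2$ of \autoref{lm:weak}. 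Likewise the clasp fragment $\gamma_\pm$ contributes an \emph{even} element (in $\Uq^\even$, as read off from \autoref{eq:clasp-left}) to one of the two components, feeding into $n_1$, while only the other component receives an $A$-pair. So both $n_1$ and $n_2$ are genuinely in play, and the ``$A^{\tensor k}$-image of something'' picture oversimplifies the structure.

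Second, your paragraph about the Cartan tensors of $R_0$ ``telescoping under zero linking to $\lattice_r$-weighted terms'' is attempting direct bookkeeping of $R$-matrix Cartan parts that the paper never does, and it is internally inconsistent with your own stated decomposition: once you use the presentation from \autoref{lm:ZLMT}, the crossings appear only inside the fragments $P_\pm$, $X_\pm$, so the relevant $R$-matrix factors always land on a single component and get organized via Properties 1 and 2 of \autoref{p:properties} into $A$-pairs. The zero-linking hypothesis enters through the existence of the Habiro decomposition (\cite{Habiro06}[Cor.~9.13], invoked in \autoref{lm:ZLMT}), not through a direct cancellation argument on $R_0$. The landing in $\Uq$ rather than $\Uweight$ is delivered by \autoref{lm:weak} itself (because $A f(\lambda) = \bigoplus_\gamma f(\lambda+\gamma)\tensor\bgam$ pairs Cartan pieces with their index in a way that cancels their left degree), not by a separate root-lattice argument.
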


\begin{proof}
This follows directly from \autoref{lm:weak}.  The computation of the universal invariant of $T$ as in \autoref{p:universal} and \autoref{p:properties} can be organized as follows:  in a presentation as in \autoref{lm:ZLMT}, compute the invariant (as sums over states of algebra elements labeling strands) of each piece that looks like \autoref{fg:generators} as well as each cup and cap as in \autoref{fg:universal}.  The overall invariant will be a sum over states of a  product for each component of all the given labels in some order. Thus by \autoref{lm:weak} it suffices to show that for each state the tensor product of the labels is weakly even, which is to say consists of some number of even elements, an even number of odd elements, and some number of elements in $A\sbrace{\Ufullcomplete}$ with $A=(1\tensor S) \Delta$.  Since the winding number of each component is even, the product will have an even number of factors of $K_{2\rho}$ which is in $\Uq^\even$ or $\Uq^{\mathrm{odd}}$.  Each $\psi^{\pm 1}$ will contribute $A(a_k) \tensor A(b_k)$ or $A(\tilde{a}_k) \tensor A(\tilde{b}_k)$  (by Properties 1 and 2 of \autoref{p:properties}) where $R=\sum_k a_k \tensor b_k$, $R^{-1} =\sum_k \tilde{a}_k \tensor \tilde{b}_k$.  Similarly, the invariant of $\gamma_+$ is $1 \tensor A$ applied to the invariant of the clasp (more precisely, the clasp without one of its caps, but the invariant is the same).  Notice in \autoref{eq:clasp-left} the invariant of the clasp is a sum of elements  $\Uq^\even \tensor \Ufull$, and therefore for each term in the sum the invariant assigns an even factor to one component and a pair of factors $A \sbrace{\Ufullcomplete}$ to the other. This completes the proof.

\begin{figure}[ht]
  \centering
  \includegraphics[width=4in]{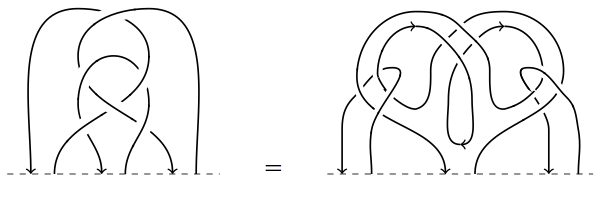}
\caption{The Borromean Tangle}
\label{fg:borromean}
\end{figure}

\begin{figure}[ht]
  \centering
  \includegraphics[width=5.5in]{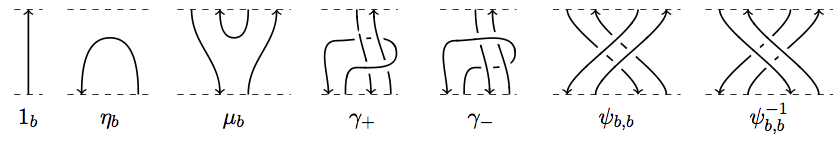}
\caption{Tangles which with Borromean generate ZLMTs}
\label{fg:base-tangles}
\end{figure}

\end{proof}

\begin{theorem} \label{th:zlmt-zlambda}
  $T$ is a mixed ZLMT with closed components labeled by invariant
  functionals on $\Uq$ or $\Ufullcomplete$ then $\universal_{\Ufull}(T) \in
  \Uqcomplete^{\even,\invariant,\tensor m}$.  If the components are labeled
  by invariant functionals on $u_\q$ or $u_\s$ (\autoref{p:small}) then $\universal_{u_{\s}}(T) \in
  u_\q^{\even,\invariant,\tensor m}$.  If $T$ has a single open component
  then in either case it is a linear combination (in the first case
  possibly infinite) of $\{z_\theta\}_{\theta \in \lattice_r}$.
\end{theorem}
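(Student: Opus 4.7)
The plan is to reduce the mixed case to the open-ZLMT case handled by Proposition \ref{pr:ZLMT}. Following step (6) of \autoref{p:universal}, if $T$ has $m$ open and $n$ closed components with the latter labeled by invariant functionals $\phi_1, \ldots, \phi_n$, there is an open bottom tangle $P$ with $m+n$ open components (the first $n$ being opened-up versions of the closed components of $T$) such that $P$ is an open ZLMT and
\[
\universal_{\Ufull}(T) = (\phi_1 \tensor \cdots \tensor \phi_n \tensor \mathrm{id}^{\tensor m})\,\universal_{\Ufull}(P).
\]
By Proposition \ref{pr:ZLMT}, $\universal_{\Ufull}(P) \in \Uqcomplete^{\even,\invariant,\tensor(n+m)}$.

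Next I would verify that partial pairing with invariant functionals preserves both evenness and invariance of the remaining factors. Evenness is linear-algebraic: if each $x_i$ is even, then so is $\phi_1(x_1)\cdots \phi_n(x_n)\,x_{n+1}\tensor \cdots \tensor x_{n+m}$, with the uniform continuity of the $\phi_i$ (see \autoref{p:invariant}) ensuring the property extends to topological sums. For invariance, I would combine the relation $\phi_i(\ad_a x) = \epsilon(a)\phi_i(x)$ with the counit identity
\[
\sum_{(a)}\epsilon(a^{(1)})\cdots\epsilon(a^{(n)})\, a^{(n+1)}\tensor \cdots \tensor a^{(n+m)} \;=\; \Delta^{m-1}(a)
\]
to reduce the diagonal adjoint action of $\Delta^{n+m-1}(a)$ on the full tensor to $\ad^{\tensor m}\Delta^{m-1}(a)$ on the surviving factors. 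Invariance of $\universal_{\Ufull}(P)$ then yields invariance of $\universal_{\Ufull}(T)$.

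The small-quantum-group version follows from Proposition \ref{pr:phi}, which gives $\universal_{u_\s}(T) = \enrich(\universal_{\Ufull}(T))$ once the labels on $u_\q$ or $u_\s$ are pulled back to $U_\s^{*,\invariant}$. Since $\enrich$ is a continuous Hopf algebra homomorphism sending $\Uqcomplete$ onto $u_\q$ and preserving the even grading, the image lies in $u_\q^{\even,\invariant,\tensor m}$.

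Finally, when $m=1$ the invariant lies in $\Uqcomplete^{\even,\invariant}=\widehat{Z}^{\even}$, which by Lemma \ref{lm:zeven} is the topological closure of the span of $\{z_\theta\}_{\theta\in\lattice_r}$; this gives the possibly infinite $z_\theta$-expansion. For $u_\s$, since $\enrich(z_\theta)=z_\theta$ (as observed in \autoref{p:zlambda}) and $u_\q$ is finite-dimensional, the resulting combination is automatically finite. The main obstacle I anticipate is purely the bookkeeping around completions: one must check that the partial pairings are well-defined and continuous on $\Uqcomplete^{\tensor(n+m)}$ and that the invariance computation passes to the topological closure, all of which reduce to the uniform continuity of invariant functionals built into the definitions of \autoref{p:invariant} and \autoref{p:Hopf}.
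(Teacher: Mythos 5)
Your proposal follows essentially the same route as the paper: reduce the mixed case to the open ZLMT via step (6), invoke \autoref{pr:ZLMT} for the open tangle, observe that pairing with invariant functionals preserves the even invariant subspace, then apply \autoref{pr:phi} and \autoref{lm:zeven} for the remaining two assertions. The paper's proof simply states the key pairing observation in one sentence without the explicit evenness/invariance checks you spell out, so your version is a more detailed rendering of the same argument.
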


\begin{proof}
  The first sentence follows from \autoref{pr:ZLMT} because applying
  an invariant functional to
  one factor of $\Uqcomplete^{\even, \invariant, \tensor m}$ maps into
  $\Uqcomplete^{\even, \invariant, \tensor m-1}$.  The second sentence
  follows from this, \autoref{pr:phi} and the fact that $\enrich$
  maps  $\Uqcomplete^{\even, \invariant, \tensor m}$ to $u_\q^{\even,\invariant,
    \tensor m}$. The third sentence follows from  \autoref{lm:zeven}. 
\end{proof}

%%%%%%%%%%%%%%%%%%%%%%%%%%%%%%%

\subsection{Equality of Invariants on Homology Spheres}

In this section assume that $l$ and $H$ are such that $\cs_H(\twist^{\pm
  1})$ are nonzero  (\autoref{p:cs}).

\begin{lemma} \label{lm:hcs-zlambda}
Let $\lambda \in \lattice$. Then
\begin{gather} \label{eq:hcs-zlambda}
\cs_H(z_{\lambda}\twist^{\pm 1}) \hen(\twist^{\pm 1})= \hen(z_{\lambda}\twist^{\pm 1}) \cs_H(\twist^{\pm 1}).
\end{gather}
\end{lemma}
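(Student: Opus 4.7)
The plan is to show that both functionals satisfy the common identity
$F(z_\lambda \twist^{\pm 1}) = \qdim_\lambda F(\twist^{\pm 1})$
for $F \in \{\cs_H, \hen\}$, from which the lemma follows by cross-multiplying.

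First, for $F = \hen$, the handleslide identity \autoref{eq:hen-handleslide} is stated to hold for \emph{any} $\phi \in u_\s^*$. Taking $\phi = \qtr_\lambda$ and $x = \twist^{\pm 1}$, and unpacking $z_\lambda = \DD\sbrace{\qtr_\lambda}$ via the notation $\sbrace{\phi\hen}(x) = \hen(\DD\sbrace{\phi}\,x)$ (the usual convention that $\phi F$ means composing with the Drinfel'd map), we get immediately
\[
\hen(z_\lambda \twist^{\pm 1}) \;=\; \qtr_\lambda(1)\, \hen(\twist^{\pm 1}) \;=\; \qdim_\lambda\, \hen(\twist^{\pm 1}).
\]

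For $F = \cs_H$, the handleslide identity \autoref{eq:cs-handleslide} applies only when $\phi$ is a linear combination of $\qtr_\mu$ with $\mu \in \lattice^l_H$. If $\lambda$ itself lies in $\lattice^l_H$, then $\phi = \qtr_\lambda$ qualifies directly and we are done. For general $\lambda$ in the relevant range (namely $\lambda \in \lattice_r \subseteq \lattice_H$, which is all that enters via \autoref{th:zlmt-zlambda}), invoke the quantum Racah formula \cite{Sawin06a}[Cor. 8]: either $\qtr_\lambda$ vanishes (when $\lambda$ lies on a wall of the translated affine Weyl group), or there exist $\lambda' \in \lattice^l_H$ and a sign $\epsilon \in \{\pm 1\}$, obtained from the orbit of $\lambda$ under the translated Weyl action, such that $\qtr_\lambda = \epsilon\, \qtr_{\lambda'}$ as invariant functionals. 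Since both the Drinfel'd map $\DD$ and the evaluation-at-$1$ functional are linear, this forces $z_\lambda = \epsilon\, z_{\lambda'}$ and $\qdim_\lambda = \epsilon\, \qdim_{\lambda'}$ (all zero in the wall case), so applying the handleslide to $\qtr_{\lambda'}$ yields
\[
\cs_H(z_\lambda \twist^{\pm 1}) \;=\; \epsilon\, \cs_H(z_{\lambda'}\twist^{\pm 1}) \;=\; \epsilon\, \qdim_{\lambda'}\, \cs_H(\twist^{\pm 1}) \;=\; \qdim_\lambda\, \cs_H(\twist^{\pm 1}).
\]
Multiplying the two identities now gives both sides of \autoref{eq:hcs-zlambda} as $\qdim_\lambda\, \cs_H(\twist^{\pm 1})\, \hen(\twist^{\pm 1})$.

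The main obstacle is the $\cs_H$ step: unlike $\hen$, which handleslides through any functional, $\cs_H$ only admits the handleslide for alcove labels, and the identity for general $\lambda$ is obtained only by reducing $\qtr_\lambda$ to $\lattice^l_H$ via quantum Racah. The $\hen$ side, by contrast, is essentially immediate from its universal handleslide property.
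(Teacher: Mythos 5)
Your strategy of reducing both sides to a common scalar identity $F(z_\lambda \twist^{\pm 1}) = c_\lambda F(\twist^{\pm 1})$ for $F\in\{\cs_H,\hen\}$ and then cross-multiplying is the same as the paper's, but the scalar you claim is wrong. The notation $\sbrace{\phi F}$ in the handleslide formulae denotes the convolution $(\phi\tensor F)\circ\Delta$, \emph{not} $F(\DD\sbrace{\phi}\,\cdot\,)$. Passing from the handleslide to a statement about $z_\lambda = \DD\sbrace{\qtr_\lambda}$ requires the ribbon identity $\Delta\twist = (\twist\tensor\twist)\clasp$, and this is exactly why the paper applies the handleslide to $\phi = \qtr_\lambda(\twist^{-1}\,\cdot\,)$ rather than to $\qtr_\lambda$ directly: the extra $\twist^{-1}$ is there to cancel the twist factor that appears when unwinding $\Delta\twist$. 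The correct scalar is therefore $c_\lambda = \qtr_\lambda(\twist^{-1})$ (for the $+$ sign), which differs from $\qdim_\lambda = \qtr_\lambda(1)$ by the ribbon eigenvalue on $V_\lambda$. Your intermediate claims $\hen(z_\lambda\twist^{\pm 1}) = \qdim_\lambda\hen(\twist^{\pm 1})$ and the corresponding one for $\cs_H$ are thus false as stated. Because the error produces the same (wrong) scalar on both sides, it cancels in the cross-multiplication, so the conclusion you reach is still correct, but the derivation as written would not stand scrutiny. On the positive side, your invocation of the quantum Racah formula to reduce a general $\lambda\in\lattice_r\subset\lattice_H$ to the alcove $\lattice^l_H$ before applying \autoref{eq:cs-handleslide} is a correct and genuinely useful clarification of a step the paper's proof leaves implicit.
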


\begin{proof}
Recall that since $\twist^{-1}$ is central it acts on $V_\lambda$
(\autoref{p:reps}) as a multiple of
the identity, so that $\qtr_\lambda(\twist^{-1} \,\cdot\,)$ is a multiple of
$\qtr_\lambda$.  Thus by
 \autoref{eq:cs-handleslide} and the fact that $\Delta \twist = (\twist
\tensor \twist) \clasp$ 
\begin{align*}
\qtr_\lambda (\twist^{-1}) \cs_H (\twist) &=
\sbrace{\qtr_\lambda(\twist^{-1}\,\cdot \, ) 
  \cs_H}(\twist) \\
&=\sbrace{\qtr_\lambda
\tensor \cs_H(\twist\,\cdot \,)}(\clasp)
= \cs_H( z_\lambda \twist).\\
\end{align*}
The same is true replacing  $\hen$ for $\cs_H$, and
\autoref{eq:hen-handleslide} for \autoref{eq:cs-handleslide}.  Multiplying the resulting equations by
$\hen(\twist)$ and $\cs_H(\twist)$ gives
\[\cs_H(z_\lambda \twist)\hen(\twist)=\cs_H(\twist)\qtr_\lambda(\twist^{-1})\hen(\twist)=\cs_H(\twist)\hen(z_\lambda \twist).\]
The same argument works replacing $r$ and $r^{-1}$, using the inverse
of the clasp element, and replacing $\lambda$ with the highest weight
$\lambda^*$ of the dual representation.
\end{proof}

\begin{lemma} \label{lm:hcs-onetangle}
  Let $T$ be a ZLMT with every component but one closed off and
  labeled by a quantum functional on $u_\s$.  Let $\epsilon=\pm 1$. Then
  \begin{gather}
    \label{eq:hcs-onetangle}
    \frac{\cs_H\sbrace{\twist^{\epsilon}
          \universal_{u_\s}(T)}}{\cs_H\sbrace{\twist^\epsilon}} = 
    \frac{\hen\sbrace{\twist^{\epsilon}
          \universal_{u_\s}(T)}}{\hen\sbrace{\twist^\epsilon}}
  \end{gather}
\end{lemma}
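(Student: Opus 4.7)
The strategy is to reduce this statement about arbitrary one-open-component ZLMTs to the single-element case already handled by Lemma \ref{lm:hcs-zlambda}. By Theorem \ref{th:zlmt-zlambda}, since $T$ is a mixed ZLMT with a single open component and closed components labeled by invariant functionals on $u_\s$, its universal invariant $\universal_{u_\s}(T)$ lies in $u_\q^{\even,\invariant}$ and is a \emph{finite} linear combination of the central elements $z_\theta$ for $\theta \in \lattice_r$.  So I would first write
\[
\universal_{u_\s}(T) = \sum_{\theta} c_\theta \, z_\theta,
\]
with $c_\theta \in \QQ(\q)$ and only finitely many nonzero.

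Next, since both $\twist^{\pm 1}$ and each $z_\theta$ are central, multiplying on the left by $\twist^\epsilon$ and applying the linear functionals $\cs_H$ and $\hen$ turns each side of \autoref{eq:hcs-onetangle} into the linear combinations
\[
\sum_\theta c_\theta \, \frac{\cs_H(\twist^\epsilon z_\theta)}{\cs_H(\twist^\epsilon)} \qquad \text{and} \qquad \sum_\theta c_\theta \, \frac{\hen(\twist^\epsilon z_\theta)}{\hen(\twist^\epsilon)}
\]
respectively.  The denominators are nonzero: $\cs_H(\twist^{\pm 1}) \neq 0$ is the standing assumption of this section, and $\hen(\twist^{\pm 1}) \neq 0$ was noted in \autoref{p:hennings}.

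To conclude, I would apply Lemma \ref{lm:hcs-zlambda} termwise.  Dividing \autoref{eq:hcs-zlambda} through by the nonzero product $\cs_H(\twist^\epsilon)\hen(\twist^\epsilon)$ gives, for every $\theta \in \lattice_r \subset \lattice$,
\[
\frac{\cs_H(z_\theta \twist^\epsilon)}{\cs_H(\twist^\epsilon)} = \frac{\hen(z_\theta \twist^\epsilon)}{\hen(\twist^\epsilon)},
\]
and summing against $c_\theta$ yields the desired identity.  No step poses a real obstacle: the essential content has already been absorbed into Theorem \ref{th:zlmt-zlambda} (which exhibits $\universal_{u_\s}(T)$ as a finite $\QQ(\q)$-linear combination of the $z_\theta$, so no convergence issue arises in passing sums through $\cs_H$ or $\hen$) and into Lemma \ref{lm:hcs-zlambda} (which equates the individual ratios).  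The present lemma is just the evident linearity argument combining them.
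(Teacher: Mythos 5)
Your proof is correct and follows exactly the paper's argument: invoke \autoref{th:zlmt-zlambda} to write $\universal_{u_\s}(T)$ as a finite $\QQ(\q)$-linear combination of the $z_\theta$, then apply \autoref{lm:hcs-zlambda} termwise after dividing by the nonzero quantities $\cs_H(\twist^\epsilon)\hen(\twist^\epsilon)$. You have merely spelled out the linearity bookkeeping that the paper leaves implicit.
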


\begin{proof}

  By \autoref{th:zlmt-zlambda} $\universal_{u_\s}(T)$ is a linear
  combination of $z_\theta$ for $\theta \in \lattice_r$.  Thus
  \autoref{eq:hcs-zlambda} holds with $z_\lambda$ replaced by
  $\universal_{u_\q}(T)$, and dividing by the nonzero quantities gives \autoref{eq:hcs-onetangle}.
\end{proof}

\begin{proposition}\label{pr:hcs-zlmt}
  Let $T$ be an open ZLMT with $m$ components and let $\epsilon_1, \ldots , \epsilon_m$ be
  nonzero integers with $|\epsilon_i|=1$ for all $1 \leq i \leq m$.  Then 

\begin{gather}\label{eq:hcs-zlmt}
\frac{\cs_H^{\tensor m}[(\twist^{\epsilon_1} \otimes \twist^{\epsilon_2} \otimes \cdots \otimes \twist^{\epsilon_m}) \universal_{u_q}(T)]}{\cs_H(\twist^{\epsilon_1})\cs_H(\twist^{\epsilon_2})\cdots\cs_H(\twist^{\epsilon_m})} =  \frac{\hen^{\tensor m}[(\twist^{\epsilon_1} \otimes \twist^{\epsilon_2} \otimes \cdots \otimes \twist^{\epsilon_m}) \universal_{u_q}(T)]}{\hen(\twist^{\epsilon_1})\hen(\twist^{\epsilon_2})\cdots\hen(\twist^{\epsilon_m})}
\end{gather}
\end{proposition}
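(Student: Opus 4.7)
The plan is to prove this by telescoping, reducing to $m$ applications of the one-component identity \autoref{lm:hcs-onetangle}. Since the ribbon element $\twist^{\pm 1}$ is central, for each $i$ the functionals
\[
\alpha_i(x) = \cs_H\!\parens{\twist^{\epsilon_i} x}, \qquad \beta_i(x) = \hen\!\parens{\twist^{\epsilon_i} x}
\]
are invariant functionals on $u_\s$ (an invariant functional composed with multiplication by a central element remains invariant). Rewriting the proposition using these, we want to show
\[
\frac{(\alpha_1 \tensor \cdots \tensor \alpha_m)\parens{\universal_{u_\s}(T)}}{\prod_{i=1}^m \cs_H(\twist^{\epsilon_i})} \;=\; \frac{(\beta_1 \tensor \cdots \tensor \beta_m)\parens{\universal_{u_\s}(T)}}{\prod_{i=1}^m \hen(\twist^{\epsilon_i})}.
\]

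For $k = 0, 1, \ldots, m$ I would define the intermediate quantity
\[
X_k \;=\; \frac{(\beta_1 \tensor \cdots \tensor \beta_k \tensor \alpha_{k+1} \tensor \cdots \tensor \alpha_m)\parens{\universal_{u_\s}(T)}}{\prod_{i=1}^{k}\hen(\twist^{\epsilon_i}) \cdot \prod_{i=k+1}^{m}\cs_H(\twist^{\epsilon_i})},
\]
so the left side is $X_0$ and the right side is $X_m$. To show $X_{k-1} = X_k$, form the one-open-component ZLMT $T_k$ obtained from $T$ by closing off every component except the $k$th, labeling the closed components $1, \ldots, k-1$ by $\beta_1, \ldots, \beta_{k-1}$ and the closed components $k+1, \ldots, m$ by $\alpha_{k+1}, \ldots, \alpha_m$. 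Because these are all invariant functionals on $u_\s$, $T_k$ satisfies the hypothesis of \autoref{lm:hcs-onetangle}. Moreover, by the multiplicativity and tensor-product compatibility of the universal invariant (\autoref{th:universal}) together with the fact that closing a component with an invariant functional amounts to contracting that tensor factor against the functional, one has
\[
\cs_H\sbrace{\twist^{\epsilon_k}\universal_{u_\s}(T_k)} = (\beta_1 \tensor \cdots \tensor \beta_{k-1} \tensor \alpha_k \tensor \alpha_{k+1} \tensor \cdots \tensor \alpha_m)\parens{\universal_{u_\s}(T)},
\]
and the analogous expression with $\cs_H$ replaced by $\hen$ yields the tensor with $\beta_k$ in the $k$th slot. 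Thus \autoref{lm:hcs-onetangle} applied to $T_k$ with $\epsilon = \epsilon_k$ is exactly the equation $X_{k-1} = X_k$.

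Chaining these equalities $X_0 = X_1 = \cdots = X_m$ gives the proposition. The one thing that requires a little care is the claim that closing off a component of a ZLMT by an invariant functional on $u_\s$ (and in particular by $\alpha_i$ or $\beta_i$) again produces a ZLMT with closed components labeled in the hypothesis of \autoref{lm:hcs-onetangle}: this follows because a ZLMT remains a ZLMT when one of its open components is closed, and because the contraction step is the universal invariant construction of \autoref{p:universal}(6). The main obstacle is really just bookkeeping — confirming that the intermediate tangles $T_k$ are legitimate one-open-component ZLMTs with the correct invariant-functional labels so that \autoref{lm:hcs-onetangle} genuinely applies at each step. Once that is in place the telescoping is automatic.
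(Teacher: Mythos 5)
Your proof is correct and follows essentially the same telescoping strategy as the paper's: both introduce intermediate quantities ($X_k$ in your notation, $A_k$ in the paper's) that interpolate between the two sides, and both reduce the step $X_{k-1}=X_k$ to Lemma~\ref{lm:hcs-onetangle} applied to the one-open-component ZLMT $T_k$ obtained by closing all other components and labeling them with the central-twisted functionals. Your write-up is in fact slightly more careful in making explicit that the labels on the closed components of $T_k$ must be $\hen(\twist^{\epsilon_j}\cdot)$ for $j<k$ and $\cs_H(\twist^{\epsilon_j}\cdot)$ for $j>k$, which appears to correct a transposition of $\cs_H$ and $\hen$ in the paper's description of $T_k$.
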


\begin{proof}
For each $0 \leq k \leq m$ let $A_k$ be  the left-hand side of
\autoref{eq:hcs-zlmt} with the $\cs_H$ replaced by $\hen$ in each factor
in the top and bottom from the first to through the $k$th.  Thus the
left hand side corresponds to $A_0$ and the right hand side to $A_m$.
So it suffices to prove that $A_{k-1}=A_{k}$ for $1 \leq k \leq m$.
If $T_k$ is  $T$ with component $j$ for $j <k$ closed and
labeled by  $\cs_H(\twist^{\epsilon_j} \, \cdot \,)$ and component  $j$ component for
$j>k$ closed and labeled by $\hen(\twist^{\epsilon_j} \, \cdot \,)$, then 
$A_k = A \hen
\sbrace{r^{\epsilon_k}\universal(T_k)}/\hen\sbrace{\twist^{\epsilon_k}}$ and
$A_{k-1}= A \cs_H
\sbrace{r^{\epsilon_k}\universal(T_k)}/\cs_H\sbrace{\twist^{\epsilon_k}}$ where
\[A^{-1} = \hen\sbrace{\twist^{\epsilon_1}} \cdots
\hen\sbrace{\twist^{\epsilon_{k-1}}} \cs_H\sbrace{\twist^{\epsilon_{k+1}}} \cdots
\cs_H\sbrace{\twist^{\epsilon_m}}.\]
Thus $A_k=A_{k-1}$ is exactly~\autoref{eq:hcs-onetangle}.
\end{proof}

\begin{theorem}\label{th:hcs}
  When $l$ and $H$ are such that the Chern-Simons invariant is defined
  (i.e. if $\cs_H(\twist^{\pm 1}) \neq 0$),
  The Hennings invariant of an integral homology three-sphere $M$ is the
  Chern-Simons invariant of $M$.
\end{theorem}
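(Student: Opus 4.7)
The plan is to reduce the theorem to the already-established \autoref{pr:hcs-zlmt}. The main ingredient is the surgery-theoretic fact that every integral homology three-sphere $M$ can be obtained by Dehn surgery on a framed link $L \subset S^3$ whose linking matrix is diagonal with $\pm 1$ entries. This follows from standard Kirby calculus: since the linking matrix of any surgery presentation of an integral homology sphere is unimodular over $\mathbb{Z}$, one can use handle-slides and stabilizations with $\pm 1$-framed unknots to diagonalize with $\pm 1$ entries; equivalently, every integral homology three-sphere admits a presentation by an algebraically split, unit-framed link (a well-known theorem of Ohtsuki). Such a link is precisely the closure of a ZLMT $T$ with $m$ open components, where the framings $\epsilon_1, \ldots, \epsilon_m \in \{+1,-1\}$ record the diagonal entries of the linking matrix of the closure.

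Second, I would unpack the definitions of the two invariants on such a presentation. Adding a framing of $\epsilon_i$ to the $i$th component of the open ZLMT corresponds to inserting $\twist^{\epsilon_i}$ on the $i$th tensor factor of the universal invariant (Property 1/2 in \autoref{p:properties}, plus the construction of $\twist$ in \autoref{p:properties}). Counting $\sigma_+$ as the number of $i$ with $\epsilon_i=+1$ and $\sigma_-$ as the number of $i$ with $\epsilon_i=-1$, we get
\begin{align*}
\cs_H(\twist)^{\sigma_+}\cs_H(\twist^{-1})^{\sigma_-}&=\cs_H(\twist^{\epsilon_1})\cdots\cs_H(\twist^{\epsilon_m}),\\
\hen(\twist)^{\sigma_+}\hen(\twist^{-1})^{\sigma_-}&=\hen(\twist^{\epsilon_1})\cdots\hen(\twist^{\epsilon_m}),
\end{align*}
and both invariants \autoref{eq:cs-tmi} and \autoref{eq:hen-tmi} for the surgery on the closure of $T$ with framings $\epsilon_i$ take precisely the form appearing on the two sides of \autoref{eq:hcs-zlmt}.

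Third, I would simply invoke \autoref{pr:hcs-zlmt} applied to the ZLMT $T$ with these framings, which yields the equality of the normalized quantities, and hence of the CS and Hennings invariants of $M$.

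The conceptually nontrivial step is the topological one: ensuring that every integral homology three-sphere really does admit a presentation of the required shape (ZLMT closed with $\pm 1$ twists). This is where one must appeal to the external surgery-theory result on algebraically split unit-framed surgery presentations. The remaining work has already been done in \autoref{pr:ZLMT}, \autoref{th:zlmt-zlambda}, \autoref{lm:hcs-zlambda}, \autoref{lm:hcs-onetangle}, and \autoref{pr:hcs-zlmt}, which together guarantee that the universal invariant of any ZLMT is a linear combination of the $z_\theta$ and that $\cs_H$ and $\hen$ agree, up to the appropriate twist normalizations, on such elements.
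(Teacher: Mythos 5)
Your proposal is correct and follows essentially the same route as the paper: invoke the surgery-theoretic fact that every integral homology three-sphere is obtained by surgery on a unit-framed algebraically split link (the closure of an open ZLMT with $\twist^{\pm 1}$ twists inserted), then apply \autoref{pr:hcs-zlmt}. You spell out the bookkeeping with $\sigma_\pm$ and the normalization a bit more explicitly than the paper does, but the argument is the same.
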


\begin{proof}
$M$ can be obtained by (integral) surgery in $S^3$ on the a framed
link whose linking matrix has all zeros except for $\pm 1$ along the
diagonal. The invariant of such a link is  a product of invariant functionals applied to
$\parens{\twist^{\epsilon_1} \tensor \cdots \tensor
  \twist^{\epsilon_m}}\universal_U(T)$ where $T$ is an open ZLMT.  Therefore the Chern-Simons invariant
of $M$ is given by the left hand side of \autoref{eq:hcs-zlmt} and the
Hennings invariant is given by the right-hand side.
\end{proof}

\bibliographystyle{alpha}

%\bibliography{/Users/ssawin/Dropbox/Helper/references}
\def\cprime{$'$} \def\cprime{$'$}

\end{document}